\theoremstyle{plain}
\newtheorem{thm}{Theorem}[section]
\newtheorem{lem}[thm]{Lemma}
\newtheorem{cor}[thm]{Corollary}
\newtheorem{prop}[thm]{Proposition}
\newtheorem*{prob*}{Problem}
\theoremstyle{definition}
\newtheorem{rem}[thm]{Remark}
\newcommand{\cT}{\ensuremath{\mathcal{T}}}
\newcommand{\cV}{\ensuremath{\mathcal{V}}}
\newcommand{\sm}{\ensuremath{\smallsetminus}}
\newcommand{\Aut}{\textnormal{Aut}}
\newcommand{\es}{\ensuremath{\emptyset}}
\newcommand{\sub}{\subseteq}
\def\td{tree-decom\-po\-si\-tion}
\newcommand{\comment}[1]{}
\newcommand{\nat}{{\mathbb N}}
\newcommand{\real}{{\mathbb R}}
\newcommand{\BF}{\ensuremath{\mathcal B}}
\newcommand{\CF}{\ensuremath{\mathcal C}}
\newcommand{\DF}{\ensuremath{\mathcal D}}
\newcommand{\EF}{\ensuremath{\mathcal E}}
\newcommand{\FF}{\ensuremath{\mathcal F}}
\def\?#1{\vadjust{\vbox to 0pt{\vss\vskip-8pt\leftline{%
     \llap{\hbox{\vbox{\pretolerance=-1
     \doublehyphendemerits=0\finalhyphendemerits=0
     \hsize16truemm\tolerance=10000\small
     \lineskip=0pt\lineskiplimit=0pt
     \rightskip=0pt plus16truemm\baselineskip8pt\noindent
     \hskip0pt        
     #1\endgraf}\hskip7truemm}}}\vss}}}
\newenvironment{txteq*}
  {
    \begin{equation*}
    \begin{minipage}[c]{0.85\textwidth} 
    \em                                
  }
  {\end{minipage}\end{equation*}\ignorespacesafterend}
\begin{document}

\title{Accessibility in transitive graphs}
\author{Matthias Hamann}
\address{Matthias Hamann, Department of Mathematics, University of Hamburg, Bundes\-stra\ss e~55, 20146 Hamburg, Germany}
\date{}
\maketitle

\begin{abstract}
We prove that the cut space of any transitive graph $G$ is a finitely generated $\Aut(G)$-module if the same is true for its cycle space.
This confirms a conjecture of Diestel which says that every locally finite transitive graph whose cycle space is generated by cycles of bounded length is accessible.
In addition, it implies Dunwoody's conjecture that locally finite hyperbolic transitive graphs are accessible.
As a further application, we obtain a combinatorial proof of Dunwoody's accessibility theorem of finitely presented groups.
\end{abstract}

\section{Introduction}\label{sec_Intro}

A locally finite transitive graph is \emph{accessible} if there exists some $k\in\nat$ such that any two ends can be separated by at most~$k$ edges.
Relating this notion to the accessibility of finitely generated groups, Thomassen and Woess~\cite{ThomassenWoess} proved that a group is accessible if and only if some (and hence every) of its locally finite Cayley graphs is accessible.

Dunwoody~\cite{D-Access} proved that the finitely presented groups are accessible.
In this paper, we obtain as a corollary of our main theorem a result for the larger class of all locally finite transitive graphs that is similar to the accessibility theorem for finitely generated groups.
Note that we have to make additional assumptions on the graph, as Dunwoody gave examples of locally finite inaccessible transitive graphs, see~\cite{D-AnInaccessibleGroup,D-AnInaccessibleGraph}.\looseness-1

Dunwoody~\cite{D-AnInaccessibleGraph} wrote that it seemed likely that all hyperbolic graphs are accessible.
More generally (see Section~\ref{sec_hyperbolic}), Diestel~\cite{D-personal} conjectured in 2010 that locally finite transitive graphs are accessible as soon as their cycle spaces are generated by cycles of bounded length.
We shall confirm both conjectures and prove the following theorem (for further definitions, see Section~\ref{sec_basics}):

\begin{thm}\label{thm_premain}
Let $G$ be a transitive graph.
If its cycle space is a finitely generated $\Aut(G)$-module, then so is its cut space.
\end{thm}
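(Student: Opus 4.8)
The plan is to fix a connected graph $G$ (the general case reduces to this by treating components together with the induced action) and to make both hypothesis and conclusion concrete. Finite generation of the cycle space means there are cycles $Z_1,\dots,Z_n$ whose $\Aut(G)$-orbits generate $\mathcal{C}(G)$; putting $k:=\max_i|Z_i|$, every element of $\mathcal{C}(G)$ is a finite symmetric difference of cycles of length at most $k$. Dually, to prove the conclusion it suffices to produce a bound $k'$ and finitely many cuts of order at most $k'$ whose $\Aut(G)$-orbits generate $\mathcal{B}(G)$. Since the single orbit $\{\,\partial v : v\in V\,\}$ already lies in $\mathcal{B}(G)$ and spans precisely the cuts with one finite side, the real task is to generate, modulo these, the \emph{splitting cuts} $\partial S$ with $S$ and $V\setminus S$ both infinite --- the cuts that record how $G$ has more than one end.

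Next I would build an $\Aut(G)$-invariant, nested family $N$ of splitting cuts that, together with the cuts of one finite side, generates $\mathcal{B}(G)$; the natural candidates are the cuts appearing in a canonical (hence automorphism-invariant) tree-decomposition of $G$ that distinguishes its ends and its highly connected regions, in the spirit of the canonical tree-decompositions of Carmesin--Diestel--Hamann--Hundertmark, for which nestedness and invariance come for free. Two things then have to be checked: (i) $N$ spans the quotient $\mathcal{B}(G)/\langle\partial v\rangle$, which is essentially the assertion that this canonical family "sees" every way in which the graph splits; and (ii) $N$ meets only finitely many $\Aut(G)$-orbits, for then finitely many representatives together with $\partial v$ give the required generators and we are done.

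The heart of the matter is (ii), and here the cycle-space hypothesis must enter. Note that the order of a cut in $N$ cannot be bounded in terms of $k$ alone: in $P_n\,\square\,\mathbb{Z}$ the cycle space is generated by $4$-cycles while the essential cut between the two ends has $n$ edges. What the hypothesis should instead buy is that cuts of large order occur only in finitely many orbits. I would argue this by routing such a cut back through the cycle space: if $c=\partial S$ is a tight (inclusion-minimal) splitting cut, connectedness of $G[S]$ and of $G[V\setminus S]$ produces, for any two edges of $c$, a cycle meeting $c$ in exactly those two edges; rewriting these cycles as sums of cycles of length at most $k$ forces the edges of $c$ to be linked, in a prescribed bounded-complexity pattern, by short cycles. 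A cut whose "short-cycle linkage pattern" is of a type not already realised, up to $\Aut(G)$, by a smaller cut would yield an element of $\mathcal{C}(G)$ that genuinely needs a cycle longer than $k$; since $\mathcal{C}(G)$ is finitely generated, only finitely many such patterns, hence finitely many orbits of "new" cuts, can occur, and feeding this back through the nested structure of $N$ bounds the number of orbits of $N$.

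The main obstacle, as I see it, is exactly this transfer from cycles to cuts. It cannot be purely formal: $\mathbb{F}_2[\Aut(G)]$ is in general very far from Noetherian, so finite generation does not pass to orthogonal complements for abstract reasons, and one must genuinely exploit the graph --- converting a large, non-redundant, essential cut into an unavoidable long cycle that is not a sum of short ones --- and, crucially, do so $\Aut(G)$-equivariantly, so that it limits the number of orbits rather than merely the size of individual cuts. A secondary difficulty is the bookkeeping in (i): ensuring that the canonical nested family really does generate all splitting cuts modulo the finite-sided ones, and not only those that separate ends.
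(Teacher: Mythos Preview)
Your architecture is right and matches the paper's: reduce to connected $G$, take an $\Aut(G)$-invariant nested family of tight cuts that generates the cut space, and bound the number of $\Aut(G)$-orbits in that family using the cycle hypothesis. For the family itself the paper uses the Dicks--Dunwoody theorem (Theorem~\ref{thm_DD}), which directly hands you, for each $n$, an invariant nested set $\EF_n\subseteq\EF_{n+1}$ of tight bipartitions of order at most $n$ generating $\BF_n(G)$; this disposes of your step~(i) and your ``secondary difficulty'' without any appeal to CDHH-style decompositions or to the finite-side/splitting dichotomy.

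The real gap is step~(ii). Your ``short-cycle linkage pattern'' sketch does not provide a mechanism: you never say what the pattern is, why there are only finitely many of them up to $\Aut(G)$, or how a new pattern would manufacture a cycle that is \emph{not} a sum of short ones --- and since the hypothesis holds, no such cycle exists, so a contradiction of that shape cannot arise. The paper's mechanism is concrete and rather different. For each generating cycle $C\in\DF$ and each nontrivial bipartition $\{X,Y\}$ of $V(C)$, the elements $(A,B)\in\EF_n$ with $X\subseteq A$ and $Y\subseteq B$ form a finite chain (Lemmas~\ref{lem_TW4.1} and~\ref{lem_totalOrder}) and hence have a minimum and a maximum. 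Over all $C\in\CF$ and all bipartitions of $V(C)$ this yields at most $2^{1+k}|\CF|$ extremal positions. If $\EF_n$ had more orbits than that, some orbit would contain an $(A,B)$ that is extremal for \emph{no} such chain; let $(A',B')<(A,B)$ be its immediate predecessor in the chain for one cycle $C$. Now one propagates: writing a cycle through any two edges of $E(A,B)$ as a sum of cycles in $\DF$ yields (Lemma~\ref{lem_Sequence}) an alternating edge--cycle sequence, along which one checks that $(A,B)$ and $(A',B')$ induce the same bipartition on every cycle met, hence agree on every edge of $E(A,B)$, hence (by tightness) coincide --- contradicting $(A',B')<(A,B)$. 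This bounds the number of orbits in each $\EF_n$ by $2^{1+k}|\CF|$, uniformly in $n$, so the chain $\EF_1\subseteq\EF_2\subseteq\cdots$ stabilises and $\BF(G)$ is finitely generated. What your proposal is missing is exactly this pair of ideas: the extremal-position count over the generating cycles, and the propagation argument that collapses a non-extremal cut onto its predecessor.
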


The following special case of Theorem~\ref{thm_premain} also follows from a result of Cornulier~\cite[Theorem 4.C.3]{Cornulier-Survey} who independently confirmed Diestel's conjecture using simplicial 2-complexes.

\begin{thm}\label{thm_main}
Every locally finite transitive graph whose cycle space is generated by cycles of boun\-ded length is accessible.
\end{thm}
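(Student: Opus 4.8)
The plan is to derive Theorem~\ref{thm_main} as a corollary of Theorem~\ref{thm_premain}, so the only real work is translating the two hypotheses ``cycle space generated by cycles of bounded length'' and ``locally finite transitive'' into the module-theoretic language of the main theorem, and then translating the module-theoretic conclusion back into accessibility. First I would observe that for a locally finite graph $G$ the cut space (the set of finite edge cuts, i.e.\ the cycle space of the dual viewpoint, closed under symmetric difference) and the cycle space are $\mathbb{F}_2$-vector spaces carrying an action of $\Aut(G)$, hence are $\mathbb{F}_2[\Aut(G)]$-modules; since $G$ is transitive, each edge orbit is a single $\Aut(G)$-orbit, so there are only finitely many edge orbits and thus finitely many generators of the relevant modules up to translation whenever a bounded-size generating set exists.

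Next I would show that ``cycle space generated by cycles of length at most~$\ell$'' implies ``cycle space is a finitely generated $\Aut(G)$-module''. The point is that in a locally finite transitive graph there are, up to the action of $\Aut(G)$, only finitely many cycles of length at most~$\ell$: fix a vertex $v$; every cycle of length $\le\ell$ through a given vertex lies within the ball of radius~$\ell$ around that vertex, which is finite by local finiteness, so there are finitely many such cycles through $v$ up to nothing, and since $G$ is transitive every cycle of length $\le\ell$ is a translate of one passing through~$v$. Hence the (finitely many) $\Aut(G)$-orbits of cycles of length $\le\ell$ give a finite generating set for the cycle space as an $\Aut(G)$-module. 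By Theorem~\ref{thm_premain}, the cut space is then also a finitely generated $\Aut(G)$-module.

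Then I would unwind what finite generation of the cut space means. A finite generating set consists of finitely many finite cuts $C_1,\dots,C_m$ such that every finite cut is a sum of translates $\gamma C_i$. Let $k$ be the maximum of $|C_i|$ over $i$. I claim this bounds the edge-connectivity needed to separate ends: given two ends $\omega_1,\omega_2$, some finite cut $C$ separates them (this is where local finiteness enters, via the standard fact that in a locally finite graph distinct ends are separated by finite edge cuts), and among all finite cuts separating them we may pick a minimal one $C$; writing $C=\gamma_1 C_{i_1}+\dots+\gamma_r C_{i_r}$, minimality forces one of the summands $\gamma_j C_{i_j}$ to already separate $\omega_1$ from $\omega_2$ — otherwise each summand lies ``on one side'' and their sum cannot cross between the ends — so $\omega_1,\omega_2$ are separated by $|\gamma_j C_{i_j}| = |C_{i_j}| \le k$ edges. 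Hence $G$ is accessible with parameter~$k$.

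The main obstacle I expect is the last step: the clean extraction of a single bounded cut from a decomposition into translates of the generators. One has to argue carefully that if a minimal separating cut $C$ between two ends is a symmetric difference of cuts none of which separates those ends, one reaches a contradiction; this requires the structural fact that a finite cut $D$ not separating $\omega_1$ from $\omega_2$ has both ends ``trapped'' on the same side of $D$, together with a parity/crossing argument showing the symmetric difference of such cuts still doesn't separate them, contradicting that the whole sum is $C$. This is essentially the standard reduction from ``finitely generated cut space'' to ``accessible'' and is presumably spelled out (or cited) in Section~\ref{sec_basics}; I would lean on that development rather than reprove it, and otherwise the argument is just the two translations described above.
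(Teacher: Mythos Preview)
Your proposal is correct and follows essentially the same route as the paper: reduce to finitely many $\Aut(G)$-orbits of short cycles using local finiteness and transitivity, apply Theorem~\ref{thm_premain}, and then extract a bounded separating cut from a finite generating set of the cut space (this last step is carried out in the paper as Theorem~\ref{thm_main3} and Corollary~\ref{cor_diestel}, not in Section~\ref{sec_basics}). One small simplification the paper makes in that final step is to work with ordered bipartitions $(A,B)\in\BF(G)$ rather than bare edge cuts, so that ``which side an end lies on'' is literally recorded by the addition; this makes your parity/crossing argument immediate and shows that your appeal to minimality of the separating cut is unnecessary.
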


Our proof includes a combinatorial proof of Dunwoody's accessibility theorem~\cite{D-Access} for finitely presented groups (Section~\ref{sec_groups}).
In Section~\ref{sec_hyperbolic}, we will deduce Dunwoody's conjecture on hyperbolic graphs from our main theorem.
Using 2-mani\-folds, Dunwoody~\cite{D-PlanarGraphsAndCovers} proved that locally finite transitive planar graphs are accessible.
In Section~\ref{sec_planar} we will apply Theorem~\ref{thm_main} to give a combinatorial proof for the accessibility of locally finite transitive planar graphs.

Note that we cannot expect an `if and only if' in Theorem~\ref{thm_premain}: Bieri and Strebel~\cite{BieriStrebel} gave an example of a (finitely generated) almost finitely presented group~$G$ that cannot be finitely presented.
This group is accessible by Dunwoody's theorem~\cite{D-Access} so it has an accessible Cayley graph $\Gamma$ and its cut space is finitely generated as $G$-module~\cite[IV.7.6]{DiDu-GroupsGraphs}.%
But as $G$ cannot be finitely presented, the cycle space of~$\Gamma$ cannot be finitely generated as $\Aut(\Gamma)$-module.

\section{Preliminaries}\label{sec_basics}

Let $G$ be a graph.
A \emph{ray} is a one-way infinite path and two rays are \emph{equivalent} if they lie eventually in the same component of $G-F$ for every finite edge set $F\sub E(G)$.
It is easy to show that this is an equivalence relation whose classes are the \emph{edge ends} of~$G$.
An edge set $F\sub E(G)$ \emph{separates} two ends if the rays of these ends lie eventually in different components of $G-F$.

Let $\BF(G)$ be the set of all ordered bipartitions $(A,B)$ such that the set $E(A,B)$ of all edges between $A$ and~$B$ is finite.
So we have $A\cap B=\es$ and $A\cup B=V(G)$.
The set $\BF(G)$ is a vector space over $\mathbb F_2$ where the addition of two ordered bipartitions has as its first component the symmetric difference between the two first components of the summands (and the set of all other vertices as its second component).
For $n\in\nat$ let $\BF_n(G)$ be the subspace induced by the bipartitions $(A,B)$ with \emph{order} at most~$n$, i.e., with $|E(A,B)|\leq n$.
So we have $\BF(G)=\bigcup_{n\in\nat}\BF_n(G)$.
Note that the action of $\Aut(G)$ on~$G$ induces an action of $\Aut(G)$ on $\BF(G)$.
We equip the set $\BF(G)$ with a relation
\[
(A,B)\leq (C,D)\ :\Longleftrightarrow\ A\sub C\text{ and }D\sub B.
\]
It is easy to verify that this is a partial order.
We call a subset $\EF$ of~$\BF(G)$ \emph{nested} if for any two $(A,B),(C,D)\in \EF$ either $(A,B)$ and $(C,D)$ or $(A,B)$ and $(D,C)$ are $\leq$-comparable.
We call $(A,B)\in\BF(G)$ \emph{tight} if the two subgraphs of~$G$ induced by~$A$ and by~$B$ are connected graphs.
The \emph{cut space} of~$G$ is the set of all edge sets $E(A,B)$ with $(A,B)\in\BF(G)$ seen as a vector space over $\mathbb F_2$, where an edge lies in the sum of two \emph{cuts} $E(A,B)$ and $E(A',B')$ if and only if it lies in precisely one of them.
There is a canonical epimorphism between $\BF(G)$ and the cut space of~$G$ that identifies two ordered bipartitions $(A,B)$ and $(C,D)$ if and only if they induce the same cut.

The following theorem is due to Dicks and Dunwoody~\cite{DiDu-GroupsGraphs}.

\begin{thm}{\cite[Theorem 2.20 and Remark 2.21 (ii)]{DiDu-GroupsGraphs}}\label{thm_DD}
If $G$ is a connected graph, then there is a sequence $\EF_1\sub\EF_2\sub\ldots$ of subsets of $\BF(G)$ such that each $\EF_n$ is an $\Aut(G)$-invariant nested set of tight elements of order at most~$n$ that generates~$\BF_n(G)$.\qed
\end{thm}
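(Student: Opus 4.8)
The plan is to construct the $\EF_n$ by a single canonical recipe, so that $\Aut(G)$-invariance and the inclusions $\EF_n\sub\EF_{n+1}$ come out of the construction instead of being arranged afterwards; the only tool I would use is the submodularity of the edge-boundary. Writing $\partial X:=E(X,V(G)\sm X)$, submodularity reads $|\partial(X\cup Y)|+|\partial(X\cap Y)|\le|\partial X|+|\partial Y|$, and it is paired with the ``uncrossing identity'' $(A,B)+(C,D)=(A\cap D,\cdot)+(B\cap C,\cdot)$ in $\BF(G)$ (together with its variant along the other diagonal of corners, obtained after swapping one side), where the second component of each summand is the complement of its first.

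First I would reduce to tight bipartitions: every element of $\BF_n(G)$ is an $\mathbb F_2$-sum of tight bipartitions of order at most $n$. If $(A,B)\in\BF_n(G)$ and, say, $A$ induces a disconnected subgraph with a component $A'$, then $(A,B)=(A',\cdot)+(A\sm A',\cdot)$, and since no edge joins $A'$ to $A\sm A'$ neither summand has order exceeding $|E(A,B)|$; iterating on both sides, controlled by a well-founded parameter, yields the claim. It therefore suffices to find, inside the tight bipartitions of order $\le n$, a canonical $\Aut(G)$-invariant nested subset that still spans $\BF_n(G)$, with these subsets increasing in $n$.

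For the construction I would single out the tight bipartitions that are \emph{minimal} in a suitable intrinsic sense --- roughly, those for which neither side can be pushed strictly inward without raising the order --- and take $\EF_n$ to be all of these of order $\le n$. Since the condition refers only to $G$, each $\EF_n$ is automatically $\Aut(G)$-invariant, and $\EF_n\sub\EF_{n+1}$ is immediate. Nestedness is then a corner argument: if $(A,B),(C,D)\in\EF_n$ were not nested, all four corners $A\cap C,A\cap D,B\cap C,B\cap D$ would be nonempty, and submodularity would force one diagonal pair of corner bipartitions to have orders no larger than those of $(A,B)$ and $(C,D)$, which together with the uncrossing identity and minimality gives a contradiction. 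That $\EF_n$ spans $\BF_n(G)$ I would prove by induction on the order (or on a finer measure): a tight bipartition failing the minimality condition is rewritten, by uncrossing it against a witness of its non-minimality, as an $\mathbb F_2$-sum of tight bipartitions that are simpler in the chosen measure.

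The step I expect to be the main obstacle is precisely this last one, together with pinning down the minimality condition so that the induction actually closes. Submodularity alone does not obviously force the pieces of such a rewriting to have smaller order, so one needs the right measure and the right notion of minimal cut; and since $G$ is not assumed locally finite, one has to argue genuinely that no infinite descent occurs and that all choices stay $\Aut(G)$-equivariant throughout. This bookkeeping --- controlling how a cut of a given order can cross other cuts of that order while the family stays simultaneously nested, invariant and generating --- rather than the submodularity manipulations themselves, is where the real work of the Dicks--Dunwoody theorem lies.
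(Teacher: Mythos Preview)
The paper does not prove this statement; it is quoted from Dicks and Dunwoody \cite[Theorem 2.20 and Remark 2.21 (ii)]{DiDu-GroupsGraphs} and closed with a \qed\ immediately after the statement. There is therefore no proof in the paper to compare your proposal against.

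As a sketch of the Dicks--Dunwoody argument itself, your outline is in the right spirit --- submodularity and corner-uncrossing are indeed the engine --- but the specific plan has a genuine gap that you partly anticipate. Your proposed minimality condition (``neither side can be pushed strictly inward without raising the order'') is too loose to force nestedness: in the corner argument, even if submodularity gives one diagonal pair of corners with orders not exceeding those of $(A,B)$ and $(C,D)$, these corners need not be tight, and once you split them into tight pieces you lose the comparability you need to contradict minimality. The actual construction in \cite{DiDu-GroupsGraphs} does not pick the nested set $\EF_n$ in one shot by an intrinsic minimality condition on single cuts; rather, it builds $\EF_n$ from $\EF_{n-1}$ by adding, among the tight bipartitions of order exactly $n$, those that are already nested with $\EF_{n-1}$ and are extremal (in a precise ``optimally nested'' sense) with respect to crossing each other, and then proves that this forces mutual nestedness and still generates $\BF_n(G)$. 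The inductive dependence on $\EF_{n-1}$ is essential and is what makes the inclusions $\EF_{n-1}\sub\EF_n$ compatible with nestedness; a purely local criterion on individual cuts, as you propose, does not seem to achieve this. Your closing paragraph correctly identifies this as the crux, but the proposal as written does not get past it.
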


A useful lemma on tight elements of~$\BF(G)$ is the following -- see e.\,g.~\cite[Proposition 4.1]{ThomassenWoess} for the first part, the second one follows by using the first one for the edges on any fixed path between the two vertices:

\begin{lem}\label{lem_TW4.1}
Let $G$ be a connected graph and $n\in\nat$.
\begin{enumerate}[\rm (1)]
\item \cite[Proposition 4.1]{ThomassenWoess} For every $e\in E(G)$, there are only finitely many tight $(A,B)\in\BF(G)$ with $e\in E(A,B)$ and order at most~$n$.
\item For every $x,y\in V(G)$, there are only finitely many tight $(A,B)\in\BF(G)$ with $x\in A$ and $y\in B$ and order at most~$n$.\qed
\end{enumerate}
\end{lem}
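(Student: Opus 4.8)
The plan is to take part~(1) as given --- it is \cite[Proposition 4.1]{ThomassenWoess} --- and to derive part~(2) from it by a short reduction using the connectedness of $G$. (For orientation: the cut $E(A,B)$ of a tight bipartition $(A,B)$ is precisely a finite bond of $G$, since removing it leaves exactly the two connected pieces $G[A]$ and $G[B]$, while removing any proper subset of it leaves $G$ connected; so part~(1) asserts that only finitely many bonds of size at most $n$ pass through a prescribed edge, and it is exactly tightness --- rather than just $|E(A,B)|\le n$ --- that makes this true.)

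For part~(2), fix $x,y\in V(G)$ and, using that $G$ is connected, a path $P$ from $x$ to $y$ with edge set $\{e_1,\dots,e_m\}$. I would then argue that every tight $(A,B)\in\BF(G)$ with $x\in A$, $y\in B$ and $|E(A,B)|\le n$ already appears among the bipartitions controlled by part~(1): along the vertex sequence of $P$ we start in $A$ and end in $B$, so two consecutive vertices of $P$ lie in different parts, i.e.\ some edge $e_i$ of $P$ has one endpoint in $A$ and the other in $B$, so that $e_i\in E(A,B)$. Thus each bipartition counted in~(2) lies, for some $i\in\{1,\dots,m\}$, in the finite family of tight bipartitions of order at most $n$ whose cut contains $e_i$. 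Taking the union over the finitely many indices $i$ then yields a finite superset of the bipartitions counted in~(2), which is exactly the assertion.

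I expect no real obstacle in the reduction itself: all of the substance sits in part~(1), which is imported from Thomassen--Woess, and the only point that needs checking for~(2) is the elementary fact that a path between $x$ and $y$ must use an edge of every finite cut separating $x$ from $y$ --- immediate once one follows the path and records where its two endpoints lie. If one wished to prove~(1) as well, \emph{that} would be the hard part, but here I would simply cite it.
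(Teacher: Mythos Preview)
Your proposal is correct and matches the paper's own argument exactly: the paper also takes part~(1) from Thomassen--Woess and derives part~(2) by applying~(1) to the edges of a fixed $x$--$y$ path.
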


\begin{lem}\label{lem_totalOrder}
Let $G$ be a graph, $x,y\in V(G)$, and $\EF$ be a nested subset of~$\BF(G)$.
Then the partial order $\leq$ is a total order on the set
\[
\EF_{(x,y)}:=\{(A,B)\in \EF\mid x\in A,y\in B\}.
\]
\end{lem}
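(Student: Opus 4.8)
The plan is to show that any two elements of $\EF_{(x,y)}$ are $\leq$-comparable (as opposed to merely being comparable after possibly flipping one of them), which is precisely what totality means once we already know from Lemma~\ref{lem_totalOrder}'s hypothesis that $\EF$ is nested. So fix $(A,B),(C,D)\in\EF_{(x,y)}$. By nestedness, one of the following holds: $(A,B)\leq(C,D)$, $(C,D)\leq(A,B)$, $(A,B)\leq(D,C)$, or $(D,C)\leq(A,B)$. The first two are exactly what we want, so the work is to rule out the last two, using the fact that $x$ lies in both $A$ and $C$ while $y$ lies in both $B$ and $D$.

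First I would treat the case $(A,B)\leq(D,C)$. By definition of $\leq$ this means $A\sub D$ and $C\sub B$. Then $x\in A\sub D$, but also $x\in C$; since $C$ and $D$ partition $V(G)$ and are disjoint, $x$ cannot lie in both unless one of $A$ or $C$ is constrained. More carefully: $x\in A\sub D$ forces $x\in D$, while $x\in C$ and $C\cap D=\es$ gives a contradiction — unless we are careless about whether the bipartition is allowed to be degenerate. The clean way is simply: $x\in A$ and $A\sub D$ give $x\in D$; but $x\in C=V(G)\sm D$, contradiction. Hence $(A,B)\leq(D,C)$ is impossible. Symmetrically, $(D,C)\leq(A,B)$ means $D\sub A$ and $B\sub C$; then $y\in B\sub C$, so $y\in C$, contradicting $y\in D=V(G)\sm C$. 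So this case is impossible as well.

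Therefore the only surviving alternatives from nestedness are $(A,B)\leq(C,D)$ and $(C,D)\leq(A,B)$, which shows $\leq$ is total on $\EF_{(x,y)}$. Since we already know from the excerpt that $\leq$ is a partial order on all of $\BF(G)$, its restriction to $\EF_{(x,y)}$ is a partial order, and combined with the comparability just established this gives a total order.

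There is essentially no obstacle here: the argument is a short case analysis on the definition of nestedness together with the observation that membership of $x$ and $y$ forces the "unflipped" orientation. The only thing to be mildly careful about is bookkeeping the direction of the inclusions in the definition of $\leq$ (namely $(A,B)\leq(C,D)$ means $A\sub C$ \emph{and} $D\sub B$), and making sure the contradiction is drawn from the correct one of $x$, $y$ in each of the two flipped subcases. I would write it as the two displayed implications above and conclude.
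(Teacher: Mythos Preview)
Your proof is correct and follows essentially the same approach as the paper: use nestedness to reduce to four cases and eliminate the two ``flipped'' ones via $x\in A\cap C$ and $y\in B\cap D$. The paper merely compresses your two subcases into a single sentence, observing that if $(A,B)$ and $(D,C)$ are $\leq$-comparable then either $A\cap C=\es$ or $B\cap D=\es$, contradicting $x\in A\cap C$ and $y\in B\cap D$.
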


\begin{proof}
Let $(A,B),(C,D)\in\EF_{(x,y)}$.
If $(A,B)$ and $(D,C)$ are $\leq$-comparable, then either $A\cap C$ or $B\cap D$ is empty both of which is a contradiction to $x\in A\cap C$ and $y\in B\cap D$.
Thus, $(A,B)$ and $(C,D)$ are $\leq$-comparable and the assertion follows immediately.
\end{proof}

The \emph{sum} of finitely many cycles $(C_i)_{i\in I}$ (over $\mathbb F_2$) in~$G$ is the subgraph induced by those edges that occur in an odd number of~$C_i$.
The set of all sums of cycles form a vector space over $\mathbb F_2$, the \emph{cycle space} of~$G$.

\begin{lem}\label{lem_Sequence}
Let $G$ be a graph, $C\sub G$ a cycle, $F$ a finite cut with precisely two edges from~$C$, and $e,f\in F$.
Let $\CF$ be any finite set of cycles in~$G$ such that $C=\sum_{D\in\CF}D$.
Then there is an alternating sequence $e_1C_1e_2C_2\ldots e_n$ of edges $e_i\in F$ and cycles $C_i\in\DF$ with $e=e_1$ and $f=e_n$ such that $e_i$ and $e_{i+1}$ are edges of~$C_i$.
\end{lem}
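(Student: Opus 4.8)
My plan is to recast the statement as a connectivity question in an auxiliary graph built from the traces of the cycles of~$\CF$ on the cut~$F$. I may assume $e\neq f$ (otherwise $n=1$). Writing $F=E(A,B)$, recall that a cycle meets a cut in an even number of edges, since going around the cycle changes sides (between~$A$ and~$B$) an even number of times; so the two edges of~$C$ lying in~$F$ are exactly $e$ and~$f$, i.e.\ $E(C)\cap F=\{e,f\}$. Now define a graph~$H$ with vertex set~$F$: for each $D\in\CF$ and each pair of distinct edges $g,h\in E(D)\cap F$, add an edge of~$H$ joining~$g$ and~$h$ and labelled~$D$ (so the edges labelled~$D$ form a complete graph on the vertex set $E(D)\cap F$). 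Then a path in~$H$ from~$e$ to~$f$, with vertices $e=e_1,e_2,\dots,e_n=f$ and with the edge from~$e_i$ to~$e_{i+1}$ labelled~$C_i$, is exactly an alternating sequence of the required kind, since each $C_i\in\CF$ and $e_i,e_{i+1}\in E(C_i)\cap F\sub E(C_i)$. Hence it suffices to show that $e$ and~$f$ lie in one component of~$H$.

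For this I would run a parity count over the components of~$H$. For every $D\in\CF$, the set $E(D)\cap F$ has even size (in particular it is not a singleton), so when it is nonempty it spans a complete, hence connected, subgraph of~$H$ and therefore lies inside a single component of~$H$. Since $E(C)=\sum_{D\in\CF}E(D)$ over~$\mathbb F_2$, intersecting with~$F$ gives $E(C)\cap F=\sum_{D\in\CF}(E(D)\cap F)$; grouping the summands according to the component of~$H$ that contains the corresponding trace $E(D)\cap F$ (summands with empty trace being irrelevant), and using that distinct components are disjoint subsets of~$F$, this displays $E(C)\cap F$ as the disjoint union, over the components~$S$ of~$H$, of the sets $X_S:=\sum_{D\in\CF,\ E(D)\cap F\sub S}(E(D)\cap F)\sub S$. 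Each $X_S$, being a sum over~$\mathbb F_2$ of sets of even size, again has even size.

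Finally, $2=|E(C)\cap F|=\sum_S |X_S|$ is a sum of non-negative even integers, so exactly one component~$S_0$ has $|X_{S_0}|=2$, i.e.\ $X_{S_0}=E(C)\cap F=\{e,f\}$, while $X_S=\es$ for every other~$S$; in particular $e,f\in S_0$. Thus $e$ and~$f$ lie in the same component of~$H$, and any $H$-path between them, read off through its vertices (as the $e_i$) and its edge labels (as the $C_i$), gives the desired sequence. The step I expect to be the real content of the argument — everything else being routine bookkeeping — is the coupling of the observation ``$|E(D)\cap F|$ is even for each cycle~$D$'' with the component-wise decomposition of $E(C)\cap F$: this is precisely what forbids $e$ and~$f$ from ending up in different components of~$H$.
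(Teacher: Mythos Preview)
Your proof is correct and follows essentially the same parity argument as the paper: the paper defines $\FF\sub\CF$ as the set of cycles reachable from~$e$ via alternating sequences and deduces $f\in\bigcup_{D\in\FF}(E(D)\cap F)$ from the parity of $\sum_{D\in\FF}|E(D)\cap F|$, which is exactly your component-wise count restricted to the component of~$H$ containing~$e$. Your auxiliary graph~$H$ and the decomposition $\{e,f\}=\bigsqcup_S X_S$ into even-size pieces are a clean repackaging of the same idea.
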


\begin{proof}
Let $\FF\sub\DF$ consist of precisely those cycles that lie on alternating sequences $e_1C_1\ldots e_n$ with $e=e_1$ and $e_i,e_{i+1}\in E(C_i)$ and $C_i\in\DF$.
Then $\sum_{D\in\FF}|E(D)\cap F|$ is even as every cycle intersects with the finite cut~$F$ in an even number of edges.
Note that each edge in $\bigcup_{D\in\FF}(E(D)\cap F)$ except for $e$ and~$f$ appears an even number of times in~$\DF$ because of $\sum_{D\in\DF}D=C$.
As $e$ lies in $\bigcup_{D\in\FF}(E(D)\cap F)$, we conclude by parity that also $f$ lies in this set.
This proves the assertion.
\end{proof}

The cut space and the cycle space of~$G$ form a natural $\Aut(G)$-module, as $G$ acts canonically on these spaces and this action respect the vector space properties.
We call such an $\Aut(G)$-module \emph{finitely generated} if there are finitely many elements that, together with all its images under $\Aut(G)$, are a generating set for the vector space.

\section{Proof of the main theorem}\label{sec_proof}

In this section, we shall prove our main theorem.
Actually, we will prove the assertion for a slightly larger class of graphs: instead of transitivity we only assume that the graphs are \emph{quasi-transitive}, that is, their automorphism groups have only finitely many orbits on the vertex sets.

\begin{thm}\label{thm_main2}
Let $G$ be a quasi-transitive graph.
If its cycle space is a finitely generated $\Aut(G)$-module, then so is its cut space.
\end{thm}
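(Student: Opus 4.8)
The plan is to combine the Dicks--Dunwoody decomposition (Theorem~\ref{thm_DD}) with a bound on the orders of the tight bipartitions that are actually needed to generate the cut space, this bound coming from the lengths of the generating cycles. Since the cycle space is a finitely generated $\Aut(G)$-module, there is an $\ell\in\nat$ such that the cycle space is generated by the set of all cycles of length at most~$\ell$; in particular every element of the cycle space, hence every cycle of~$G$, is a sum of cycles of length at most~$\ell$. By Theorem~\ref{thm_DD} we may fix nested $\Aut(G)$-invariant sets $\EF_1\sub\EF_2\sub\cdots$, where $\EF_n$ consists of tight bipartitions of order at most~$n$ and generates $\BF_n(G)$. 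As $\BF(G)=\bigcup_n\BF_n(G)$ maps onto the cut space, it is enough to exhibit a single~$N$ with $\langle\EF_N\rangle=\langle\EF_n\rangle$ for all $n\ge N$: then the cut space equals $\langle\EF_N\rangle$, and since quasi-transitivity together with Lemma~\ref{lem_TW4.1} forces $\EF_N$ to have only finitely many $\Aut(G)$-orbits, $\langle\EF_N\rangle$ is a finitely generated $\Aut(G)$-module.

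The heart of the proof is the following reduction step: there is a constant $N=N(\ell)$ such that every tight bipartition $(A,B)$ with $|E(A,B)|>N$ can be written, in the cut space, as a sum of two tight bipartitions of strictly smaller order. Granting this, an induction on the order shows that every tight bipartition is a sum of tight bipartitions of order at most~$N$; by Theorem~\ref{thm_DD} these lie in $\langle\EF_N\rangle$, so the cut space equals $\langle\EF_N\rangle$, as required. To prove the reduction step I would use crucially that the two sides of a tight bipartition induce connected subgraphs. Fix a tight $(A,B)$ with $F:=E(A,B)$ of order $m>N$, where $N\ge\ell$. On the one hand, every non-bridge edge of~$F$ lies on a cycle and hence, by the choice of~$\ell$, on a cycle of length at most~$\ell$; since $m>\ell$, such a short cycle cannot contain all of~$F$. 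On the other hand, since $G[A]$ and $G[B]$ are connected, for any two edges $e,f\in F$ one obtains a cycle meeting~$F$ in exactly $\{e,f\}$ by joining the $A$-endpoints of $e$ and~$f$ by a path in $G[A]$ and their $B$-endpoints by a path in $G[B]$; Lemma~\ref{lem_Sequence} then links~$e$ to~$f$ by an alternating sequence of edges of~$F$ and cycles of length at most~$\ell$. The idea is to combine these two phenomena: a short cycle through a suitably chosen edge of~$F$ selects a nontrivial partition $A=A'\cup A''$ along which only few edges of~$G[A]$ run, and one arranges, using the connecting sequences, that $G[A']$ and $G[A'']$ stay connected and each still sends an edge into~$B$; then $(A',V\sm A')$ and $(A'',V\sm A'')$ are tight, their sum is $(A,B)$, and both have order below~$m$.

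The step I expect to be the main obstacle is precisely this last one: converting the bare hypothesis that short cycles generate the cycle space into an explicit refinement of a large tight cut into two strictly smaller tight cuts, with a bound $N(\ell)$ depending only on~$\ell$, and doing it in a way that survives iteration, so that the orders genuinely keep decreasing rather than creeping back up. Getting good control of how the finitely many relevant short cycles sit across the cut~$F$ is the delicate point. By contrast, the surrounding bookkeeping should be routine: the passage from the reduction step to finite generation via Theorem~\ref{thm_DD}, the fact that $\EF_N$ has finitely many $\Aut(G)$-orbits (Lemma~\ref{lem_TW4.1} and quasi-transitivity), and the treatment of bridges and of bipartitions with an empty side.
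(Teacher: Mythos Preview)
Your proposal has a genuine gap, and in fact the central ``reduction step'' is false as you have stated it. You claim that there is a constant $N=N(\ell)$, depending only on the bound~$\ell$ on the lengths of the generating cycles, such that in any such graph every tight bipartition of order exceeding~$N$ splits as a sum of two tight bipartitions of strictly smaller order. But consider, for each $i\in\nat$, the graph $G_i$ obtained from the double ray by replacing every vertex by a copy of~$K_i$ and joining two consecutive copies by a complete bipartite graph. Each $G_i$ is vertex-transitive, its cycle space is generated by triangles, and its edge-connectivity is $3i-1$. In particular the tight bipartition $(\{v\},V\sm\{v\})$ has order $3i-1$, while every tight bipartition of order less than $3i-1$ is trivial. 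Hence for $i$ large enough that $3i-1>N(3)$, this tight cut cannot be written as a sum of two tight cuts of smaller order, and your reduction fails. The point is that your sketch never invokes quasi-transitivity in the reduction step itself---only afterwards, to count orbits in~$\EF_N$---so it purports to bound~$N$ by~$\ell$ alone, which the family $(G_i)$ rules out. The heuristic ``a short cycle through an edge of~$F$ selects a nontrivial partition $A=A'\cup A''$'' does not do what you need: a short cycle through a cut edge typically has most of its vertices on one side, and gives no handle on splitting~$A$ with few internal crossing edges.

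The paper avoids this entirely. Rather than reducing the \emph{order} of tight cuts, it bounds the number of \emph{$\Aut(G)$-orbits} in each~$\EF_n$, uniformly in~$n$, by $2^{1+k}|\CF|$, where $k$ and~$|\CF|$ come from a fixed finite generating set~$\CF$ of cycles. The mechanism is the nested structure: by Lemmas~\ref{lem_TW4.1} and~\ref{lem_totalOrder}, the elements of~$\EF_n$ inducing a given bipartition of~$V(C)$ for $C\in\CF$ form a finite chain, so there are only boundedly many orbits whose members are extremal for some such chain; any non-extremal orbit is then shown, via Lemma~\ref{lem_Sequence} and the tightness of both sides, to coincide with its predecessor in the chain, a contradiction. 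Once the number of orbits is uniformly bounded, the chain $\EF_1\sub\EF_2\sub\cdots$ must stabilise. Quasi-transitivity enters only to guarantee that finitely many $\Aut(G)$-orbits of cycles suffice, which is exactly where your argument is silent.
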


\begin{proof}
We may assume that $G$ is connected.
Indeed, quasi-transitivity implies that there are only finitely many $\Aut(G)$-orbits on the components of~$G$.
So if we show that the cut space of every component~$C$ is a finitely generated $\Aut(C)$-module, then the cut space of~$G$ is a finitely generated $\Aut(G)$-module.

Note that it suffices to prove that $\BF(G)$ is a finitely generated $\Aut(G)$-module.
Let $\CF$ be a finite generating set of the cycle space of~$G$ as an $\Aut(G)$-module and set
\[
\DF:=\bigcup_{\psi\in\Aut(G)}\CF\psi.
\]
Let~$k$ be the length of a largest cycle in~$\CF$.
Due to Theorem~\ref{thm_DD}, we find for every $n\in\nat$ some $\Aut(G)$-invariant nested $\EF_n$ all of whose elements are tight and such that $\EF_n$ generates $\BF_n(G)$ as a vector space.
Furthermore, we may assume $\EF_n\sub\EF_{n+1}$ for all $n\in\nat$.

For every non-trivial bipartition $\{X,Y\}$ of the vertex set of any cycle $C$, there are only finitely many $(A,B)\in\EF_n$ with $X\sub A$ and $Y\sub B$ and these are totally ordered due to Lemmas~\ref{lem_TW4.1} and~\ref{lem_totalOrder}.
So if there exists some, they have a smallest and a largest element.
Thus, among the elements of~$\EF_n$ that induce a non-trivial (ordered) bipartition on $V(C)$, there are at most $2^{|V(C)|}$ many smallest and at most $2^{|V(C)|}$ many largest such elements.
%
Hence, if $\EF_n$ contains more than $2^{1+k}|\CF|$ orbits, there must be one orbit such that none of its elements is smallest or largest for any bipartition of any $C\in\DF$.
Let $(A,B)$ be an element of such an orbit and let $C\in\DF$ such that $(A,B)$ induces a non-trivial bipartition of~$V(C)$.
As above the set of elements $(A',B')$ of~$\EF_n$ that induce the same bipartition on~$V(C)$ as $(A,B)$, i.\,e.\ $A\cap V(C)=A'\cap V(C)$, forms a finite total order.
So we find a unique largest with the property $(A',B')<(A,B)$ among them.
Note that $E(A,B)$ and $E(A',B')$ coincide on~$C$.

We shall show $A=A'$ and $B=B'$.
Let $C'\in \DF$ such that $C$ and~$C'$ share an edge~$xy$.
Thus, the ordered bipartitions $(A,B)$ and $(A',B')$ induce non-trivial bipartitions on~$V(C')$.
Since any $(E,F)\in\EF_n$ with $(A',B')<(E,F)<(A,B)$ must induce the same bipartition on~$C$ as $(A,B)$, we conclude by the choice of $(A',B')$ that no such element of~$\EF_n$ exists.
By its choice, $(A,B)$ is neither minimal nor maximal with respect to its induced bipartition on~$V(C')$.
Since the bipartitions separating $x$ and~$y$ form a finite total order due to Lemmas~\ref{lem_TW4.1} and~\ref{lem_totalOrder}, the bipartition $(A',B')$ is the unique largest element of~$\EF_n$ that induces the same bipartition of~$V(C')$ as~$(A,B)$.

We have shown
\begin{equation}\tag{$*$}\label{eq_C3KnImpliesHKn3}
\begin{minipage}[t]{0.9\textwidth}
\em
If $C_1,C_2\in\DF$ share an edge, if $(A,B)$ and $(A',B')$ coincide on~$C_1$, and if $(A',B')$ is maximal in~$\EF_n$ among those that are smaller than $(A,B)$ and that coincide with $(A,B)$ on~$C_1$, then $(A,B)$ and $(A',B')$ coincide on~$C_2$.
\end{minipage}
\end{equation}

As $(A,B)$ is tight, we find for any two edges in $E(A,B)$ some cycle that meets $E(A,B)$ in precisely those two edges and the same for $(A',B')$.
Let $e,f\in E(A,B)$ such that $e$ is an edge of~$C$.
So $e$ lies in $E(A',B')$.
We shall show that also $f$ lies in $E(A',B')$.
Due to Lemma~\ref{lem_Sequence}, we find a sequence $e_1C_1e_2\ldots e_n$ with $e_1=e$ and $e_n=f$ such that every $C_i$ lies in~$\DF$.
By adding $eC$ at the beginning of this sequence, we may assume that $C_1=C$.
Applying ($*$), we conclude that $(A,B)$ and $(A',B')$ coincide on~$C_2$.
Inductively, they coincide on every cycle $C_i$, in particular on~$C_n$.
Thus, $f$~-- and hence every edge of $E(A,B)$~-- lies in $E(A',B')$.
So we have $E(A,B)\sub E(A',B')$ and hence $(A,B)=(A',B')$ as $(A,B)$ and $(A',B')$ are tight.
This contradiction shows that there are at most $2^{1+k}|\CF|$ orbits in~$\EF_n$ and that $\BF_n(G)$ is a finitely generated $\Aut(G)$-module.\looseness-1

As every $\EF_n$ has at most $2^{1+k}|\CF|$ orbits, some $\EF_i$ contains maximally many orbits.
Since the orbits of $\EF_i$ are also orbits of~$\EF_j$ for every $j\geq i$, we have $\EF_i=\EF_j$ for all $j\geq i$.
This shows that $\BF(G)$ is a finitely generated $\Aut(G)$-module.
\end{proof}

We extend the notion of accessibility to arbitrary graphs: a graph is \emph{accessible} if there exists some $k\in\nat$ such that any two edge ends can be separated by at most~$k$ edges.

\begin{thm}\label{thm_main3}
Every quasi-transitive graph $G$ whose cycle space is a finitely generated $\Aut(G)$-module is accessible.
\end{thm}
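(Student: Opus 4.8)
The plan is to read off accessibility from Theorem~\ref{thm_main2} by producing, for a single constant $N$, a separator of size at most $N$ for every pair of distinct edge ends. As in the proof of Theorem~\ref{thm_main2} one first reduces to the case that $G$ is connected: there are finitely many $\Aut(G)$-orbits of components, the cycle-space hypothesis is inherited by each of them, two ends lying in distinct components are separated by $\es$, and components in the same orbit are isomorphic, so a common bound for all components works for $G$. For connected $G$ the proof of Theorem~\ref{thm_main2} yields slightly more than its statement: the stabilised set $\EF:=\EF_i$ (where $\EF_i=\EF_j$ for all $j\ge i$) is $\Aut(G)$-invariant, generates $\BF(G)$ as an $\mathbb F_2$-vector space, and consists of bipartitions of order at most $N:=i$. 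In particular, every $(A,B)\in\BF(G)$ is a finite sum of bipartitions of order at most $N$. I claim $G$ is accessible with this $N$.

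Next I would record the standard fact that any two distinct edge ends $\omega_1,\omega_2$ are separated by \emph{some} finite edge set: choosing rays $R_j\in\omega_j$ and a finite $F\sub E(G)$ with $R_1,R_2$ in different components of $G-F$, all rays of $\omega_j$ lie eventually in the component $K_j$ of $G-F$ containing the tail of $R_j$, and $K_1\ne K_2$. Putting $X:=K_1$ and $Y:=V(G)\sm K_1$, we get $(X,Y)\in\BF(G)$ such that all rays of $\omega_1$ lie eventually in $X$ and all rays of $\omega_2$ lie eventually in $Y$.

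The heart of the argument is then a parity count. Write $(X,Y)=\sum_{l=1}^{s}(A_l,B_l)$ with each $(A_l,B_l)\in\BF(G)$ of order at most $N$. As each cut $E(A_l,B_l)$ is finite, all rays of $\omega_j$ lie eventually in $A_l$ or eventually in $B_l$; set $P_j:=\{\,l\mid\text{the rays of }\omega_j\text{ lie eventually in }A_l\,\}$. Then all rays of $\omega_j$ lie eventually in $Z_j:=\bigcap_{l\in P_j}A_l\cap\bigcap_{l\notin P_j}B_l$, and a vertex of $Z_j$ lies in $X=A_1\mathbin{\triangle}\cdots\mathbin{\triangle}A_s$ exactly when $|P_j|$ is odd; comparing with the side of $(X,Y)$ that contains $\omega_j$ gives $|P_1|$ odd and $|P_2|$ even, whence $P_1\ne P_2$. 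For any $l\in P_1\mathbin{\triangle}P_2$ the rays of $\omega_1$ and of $\omega_2$ lie eventually on opposite sides of $(A_l,B_l)$, so the finite cut $E(A_l,B_l)$, of order at most $N$, separates $\omega_1$ from $\omega_2$. Hence $G$ is accessible with $k=N$.

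I expect the only real obstacle to be the first step: turning the abstract conclusion of Theorem~\ref{thm_main2} (finite generation of the cut space as an $\Aut(G)$-module) into the concrete, \emph{uniform} statement that there is a fixed $N$ with every finite-order bipartition a sum of bipartitions of order $\le N$. This is why I would appeal to the construction inside the proof of Theorem~\ref{thm_main2} (the bounded order of $\EF_i$) rather than to its statement, and why the reduction to connected graphs is made first. One should also check that the parity count is unaffected by the presence of the trivial bipartitions $(V(G),\es)$ and $(\es,V(G))$ among the summands; this is immediate, since such a summand lies in every $P_j$ or in no $P_j$. The remaining steps -- the reduction to ``separated by some finite edge set'' and the parity count itself -- are routine.
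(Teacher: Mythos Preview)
Your proof is correct and essentially the same as the paper's; your parity count spells out what the paper compresses into ``by the definition of addition of ordered bipartitions.'' Your concern about the first step is unfounded: once $\BF(G)$ is a finitely generated $\Aut(G)$-module, taking $N$ to be the maximum order among any finite generating set gives $\BF(G)=\BF_N(G)$ immediately, since automorphisms preserve the order of a bipartition---no appeal to the internals of the proof of Theorem~\ref{thm_main2} is needed.
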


\begin{proof}
By Theorem~\ref{thm_main2}, the space $\BF(G)$ is finitely generated as $\Aut(G)$-module.
If $n$ is the largest size of any cut in some finite generating set, then we obtain $\BF(G)=\BF_n(G)$.

Let $\omega,\omega'$ be two edge ends of~$G$.
Then there is some ordered bipartition $(X,Y)$ such that the edge set $E(X,Y)$ separates $\omega$ and~$\omega'$.
We can write $(X,Y)$ as a sum of finitely many $(A_i,B_i)\in\EF_n$ by the choice of~$\EF_n$.
So some edge set $E(A_i,B_i)$ separates $\omega$ and~$\omega'$, too, by the definition of addition of ordered bipartitions.
\end{proof}

As every locally finite quasi-transitive graph $G$ has only finitely many $\Aut(G)$-orbits of cycles of length at most~$k$, we obtain as a corollary of Theorem~\ref{thm_main3}:

\begin{cor}\label{cor_diestel}
Every locally finite quasi-transitive graph whose cycle space is generated by cycles of bounded length is accessible.\qed
\end{cor}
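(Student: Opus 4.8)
The plan is to derive the corollary directly from Theorem~\ref{thm_main3} by checking that its hypothesis is met: under the assumptions of the corollary, the cycle space of~$G$ is a finitely generated $\Aut(G)$-module.

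Suppose the cycle space of~$G$ is generated by cycles of length at most some fixed $k\in\nat$; equivalently, it is generated by the set~$\SF$ of \emph{all} cycles of~$G$ of length at most~$k$. By quasi-transitivity, fix a finite set $R\sub V(G)$ containing one vertex from each $\Aut(G)$-orbit. Because $G$ is locally finite, I would first observe that only finitely many cycles of length at most~$k$ pass through a given vertex~$v$: the number of walks of length at most~$k$ starting at~$v$ is finite, since at each step there are only finitely many edges along which to continue, and every such cycle is traced out by such a walk. Taking the union over the finite set~$R$, only finitely many members of~$\SF$ meet~$R$.

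Next I would note that every $C\in\SF$ lies in the $\Aut(G)$-orbit of some cycle meeting~$R$: choosing any vertex~$v$ of~$C$ and some $\psi\in\Aut(G)$ with $v\psi\in R$, the cycle~$C\psi$ has length at most~$k$ and meets~$R$. Hence the finitely many cycles in~$\SF$ that meet~$R$, together with all their images under $\Aut(G)$, yield all of~$\SF$ and therefore generate the cycle space of~$G$. So the cycle space is a finitely generated $\Aut(G)$-module, and Theorem~\ref{thm_main3} shows that $G$ is accessible.

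The only point requiring any care is the finiteness of the set of short cycles through a fixed vertex, which is precisely where local finiteness enters; the remainder is a routine orbit-counting argument, so I do not anticipate a real obstacle.
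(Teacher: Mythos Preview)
Your proof is correct and follows the same approach as the paper: both observe that local finiteness together with quasi-transitivity gives only finitely many $\Aut(G)$-orbits of cycles of length at most~$k$, so the cycle space is a finitely generated $\Aut(G)$-module and Theorem~\ref{thm_main3} applies. The paper states this in a single sentence, while you spell out the orbit-counting argument in detail.
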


Interestingly, the assumption of quasi-transitivity in Theorem~\ref{thm_main3} is unnecessary.
In order to show this, we first look at $2$-edge-connected graphs:

\begin{cor}\label{cor_2edgecon}
Let $G$ be a $2$-edge-connected graph whose cycle space is a finitely generated $\Aut(G)$-module.
Then its cut space is a finitely generated $\Aut(G)$-module.
\end{cor}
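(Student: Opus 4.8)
The plan is to repeat the proof of Theorem~\ref{thm_main2} almost word for word, with $2$-edge-connectedness taking over the one role that quasi-transitivity played there. A $2$-edge-connected graph is connected, so there is no reduction to components to carry out, and, exactly as in Theorem~\ref{thm_main2}, it suffices to prove that $\BF(G)$ is a finitely generated $\Aut(G)$-module, since the cut space is a quotient of $\BF(G)$. So I would fix a finite generating set $\CF$ of the cycle space of~$G$ as an $\Aut(G)$-module, let $k$ be the length of a largest cycle in~$\CF$, put $\DF:=\bigcup_{\psi\in\Aut(G)}\CF\psi$ (so $\DF$ generates the cycle space as an $\mathbb F_2$-vector space and every cycle in it has length at most~$k$), and take a chain $\EF_1\sub\EF_2\sub\cdots$ as furnished by Theorem~\ref{thm_DD}, each $\EF_n$ an $\Aut(G)$-invariant nested set of tight elements generating $\BF_n(G)$.

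Next I would rerun the counting step unchanged: at most $2^{1+k}|\CF|$ orbits of~$\EF_n$ can contain an element that is smallest or largest (call such an element \emph{extremal}) for some non-trivial bipartition of some cycle in~$\DF$, because this count only uses the finiteness of~$\CF$ together with the $\Aut(G)$-invariance of $\DF$ and of each $\EF_n$, and not quasi-transitivity. Assuming for a contradiction that $\EF_n$ has more than $2^{1+k}|\CF|$ orbits, I then pick an element $(A,B)$ of an orbit that contains no extremal element.

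The single point at which $2$-edge-connectedness is needed is to produce a cycle $C\in\DF$ on which $(A,B)$ induces a non-trivial bipartition. For this, choose any $e\in E(A,B)$; since $G$ is $2$-edge-connected, $e$ lies on a cycle~$D$, and writing $D$ as a finite sum of cycles in~$\DF$ the edge~$e$ must occur in an odd number of those summands, so some $C\in\DF$ contains~$e$ and hence $(A,B)$ is non-trivial on~$V(C)$. From here the argument of Theorem~\ref{thm_main2} goes through verbatim: one picks the largest $(A',B')<(A,B)$ in~$\EF_n$ agreeing with $(A,B)$ on~$V(C)$, establishes property~\eqref{eq_C3KnImpliesHKn3}, and then uses the tightness of $(A,B)$ and of $(A',B')$ together with Lemma~\ref{lem_Sequence} to propagate the agreement along chains of cycles of~$\DF$ sharing edges, concluding $E(A,B)\sub E(A',B')$ and therefore $(A,B)=(A',B')$, a contradiction. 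Hence each $\EF_n$ has at most $2^{1+k}|\CF|$ orbits, the chain $(\EF_n)$ stabilizes as in Theorem~\ref{thm_main2}, and $\BF(G)$ -- and so the cut space of~$G$ -- is a finitely generated $\Aut(G)$-module.

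The main thing to get right is the bookkeeping of \emph{exactly} which steps of the proof of Theorem~\ref{thm_main2} used quasi-transitivity. The orbit bound $2^{1+k}|\CF|$ is genuinely a count of $\Aut(G)$-orbits governed by $|\CF|$, not by a finite number of vertex orbits; the fact that a tight cut contains, for any two of its edges, a cycle meeting it in precisely those two edges needs only that the two sides induce connected subgraphs (concatenate an $A$-path, the first edge, a $B$-path, and the second edge); and the propagation via~\eqref{eq_C3KnImpliesHKn3} and Lemma~\ref{lem_Sequence} appeals neither to local finiteness nor to finiteness of orbits. So the only genuinely new ingredient is the existence of a cycle of~$\DF$ meeting a given tight cut, and in a $2$-edge-connected graph this is immediate -- indeed a tight cut no edge of which lies on a cycle would be a single bridge, which $2$-edge-connectedness forbids. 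I expect this to be the crux: confirming that nothing else in the proof of Theorem~\ref{thm_main2} quietly depended on the graph being quasi-transitive.
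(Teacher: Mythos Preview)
Your argument is correct, but it takes a substantially longer route than the paper does. The paper's proof is a three-line observation: in a $2$-edge-connected graph every edge lies on a cycle, and since the cycle space is generated as an $\Aut(G)$-module by the finite set~$\CF$, every edge lies on some $\Aut(G)$-image of a cycle in~$\CF$. Hence there are only finitely many $\Aut(G)$-orbits of edges, and therefore only finitely many orbits of vertices --- so $G$ is quasi-transitive, and Theorem~\ref{thm_main2} applies as a black box.

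In other words, rather than replacing quasi-transitivity by $2$-edge-connectedness inside the proof of Theorem~\ref{thm_main2}, the paper shows that under the stated hypotheses $2$-edge-connectedness \emph{implies} quasi-transitivity. Your approach reruns the entire machinery of Theorem~\ref{thm_main2}, and your analysis of where quasi-transitivity enters that proof is accurate (indeed, your remark that the crucial use is to guarantee a cycle of~$\DF$ meeting the chosen tight cut is exactly right, and $2$-edge-connectedness supplies this directly). What you gain is a clearer picture of which hypothesis does what in the main argument; what you lose is brevity, since you duplicate several pages of work that the paper avoids by a one-step reduction.
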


\begin{proof}
Let $\CF$ be a finite generating set of the cycle space of~$G$ as an $\Aut(G)$-module.
The assumption of $2$-edge-connectivity implies that every edge lies on some cycle and thus on the image of some cycle in~$\CF$ under some automorphism of the graph.
So there are only finitely many orbits on the edges of~$G$ and thus also on the vertices of~$G$.
The assertion follows by Theorem~\ref{thm_main2}.
\end{proof}

Now we use Corollary~\ref{cor_2edgecon} to strengthen Theorem~\ref{thm_main3}:

\begin{thm}\label{thm_main4}
Every graph $G$ whose cycle space is a finitely generated $\Aut(G)$-module is accessible.
\end{thm}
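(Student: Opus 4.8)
The plan is to reduce to the $2$-edge-connected case and then apply Corollary~\ref{cor_2edgecon} piece by piece. First I would observe that it suffices to treat connected~$G$: a graph is accessible once each of its components is accessible with a common bound, and the bound produced below will depend only on finitely many isomorphism types of subgraphs of~$G$.

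So assume $G$ is connected. I would call two vertices equivalent if they are equal or joined by two edge-disjoint paths; the classes~$C$ of this equivalence relation induce connected subgraphs~$G[C]$, the edges of~$G$ running between distinct classes are precisely the bridges of~$G$, and contracting the classes yields a tree. Hence no bridge lies on a cycle, every cycle of~$G$ is contained in a single~$G[C]$, and $G[C]$ is $2$-edge-connected whenever $|C|\geq 2$. Since a path of~$G$ between two vertices of a class~$C$ cannot leave $G[C]$ and return, two vertices of~$C$ lie in the same component of $G-F$ if and only if they do in $G[C]-F$, for every finite $F\sub E(G[C])$; consequently every cut of $G[C]$ is realised, by the same edge set, as a cut of~$G$, and the cut space of $G[C]$ embeds into that of~$G$ preserving orders. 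Decomposing an arbitrary finite cut of~$G$ into its bridges and its intersections with the subgraphs~$G[C]$ then exhibits the cut space of~$G$ as the sum of the order-one cuts~$\{b\}$, $b$ a bridge, together with the (images of the) cut spaces of the subgraphs~$G[C]$.

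Next I would check that only finitely many classes~$C$ with $|C|\geq 2$ fall into distinct $\Aut(G)$-orbits: such a $G[C]$ contains a cycle, and in the decomposition of the cycle space of~$G$ along the classes this cycle must be built from a translate of one of the finitely many generators, so $G[C]$ lies in the orbit of the class of that generator. For a fixed such~$C$, a presentation over $\Aut(G)$ of an arbitrary cycle of $G[C]$ can be localised to~$G[C]$ (discard the translates lying outside~$G[C]$ and rewrite each remaining one via a fixed automorphism carrying a generator into~$G[C]$), which shows that the cycle space of~$G[C]$ is finitely generated as an $\Aut(G[C])$-module; as $G[C]$ is $2$-edge-connected, Corollary~\ref{cor_2edgecon} then provides a finite generating set of its cut space, all of whose elements have some order~$\leq n_C$, whence the cut space of~$G[C]$ lies in $\BF_{n_C}(G)$. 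Taking $n$ to be the maximum of~$1$ and of the finitely many values $n_C$ (which depend only on the orbit of~$C$), the second paragraph yields $\BF(G)=\BF_n(G)$. Accessibility then follows exactly as in the proof of Theorem~\ref{thm_main3}: if $(X,Y)\in\BF(G)$ separates two edge ends, write it as a sum of ordered bipartitions of order at most~$n$; by the definition of the addition in $\BF(G)$, one of these summands already separates them.

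The hard part is the treatment of the bridges. In contrast to the quasi-transitive setting, the cut space of~$G$ need not be finitely generated as an $\Aut(G)$-module — think of a tree with infinitely many edge orbits — so Theorem~\ref{thm_main3} is not available directly. The point that makes the reduction work is that each $2$-edge-connected component meets the rest of~$G$ only through bridges, so it introduces no shortcuts and its cut space sits inside the cut space of~$G$ without increasing orders, while the bridges themselves contribute only cuts of order one; some bookkeeping is also needed to transfer a finite generating set of the cycle space of~$G$ to each of the finitely many orbit representatives among the $2$-edge-connected components.
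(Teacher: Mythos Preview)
Your argument is correct and rests on the same decomposition as the paper's proof---splitting $G$ into its maximal $2$-edge-connected subgraphs and the bridges between them, and observing that only finitely many $\Aut(G)$-orbits of such subgraphs occur since each contains a translate of a cycle from the finite generating set~$\CF$. The difference lies in what you do with the pieces. The paper applies Theorem~\ref{thm_main3} to each $2$-edge-connected piece to obtain accessibility there, and then argues directly about rays: two ends eventually in the same piece are separated by a bounded cut by the accessibility just established, while ends in different pieces (or ends whose rays never settle into a single piece) are separated by a single bridge. You instead push everything through the cut space: you lift cuts of $G[C]$ back to cuts of~$G$ of the same order (using that each component of $G-C$ hangs off a single vertex of~$C$), combine these with the order-one bridge cuts, and conclude that $\BF(G)=\BF_n(G)$ for a uniform~$n$, from which accessibility follows as in Theorem~\ref{thm_main3}. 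Your route yields the slightly stronger conclusion $\BF(G)=\BF_n(G)$, at the cost of verifying the order-preserving embedding of the local cut spaces and the localisation of the cycle-space generators to~$G[C]$ (the latter works because an automorphism of~$G$ taking one cycle in~$G[C]$ to another must stabilise~$C$, as $2$-edge-connected components are canonical). The paper's end-based case analysis is shorter but does not establish $\BF(G)=\BF_n(G)$, which indeed can fail to upgrade to finite generation of the cut space as an $\Aut(G)$-module, as you note with the tree example.
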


\begin{proof}
Let $\CF$ be a finite generating set of the cycle space of~$G$ as an $\Aut(G)$-module.
As in the proof of Corollary~\ref{cor_2edgecon} we obtain that every maximal $2$-edge-connected subgraph of~$G$ is quasi-transitive and hence, due to Theorem~\ref{thm_main3}, accessible.
Note that there are only finitely many $\Aut(G)$-orbits of maximal $2$-edge-connected subgraphs of~$G$ since every such subgraph must contain some cycle $C\alpha$ with $C\in\CF$ and $\alpha\in\Aut(G)$ and there are only finitely many $\Aut(G)$-orbits of such cycles.
Thus, there is some $n\in\nat$ such that any two edge ends of~$G$ whose rays lie eventually in the same maximal $2$-edge-connected subgraph of~$G$ are separated by at most~$n$ edges.
Rays of different maximal $2$-edge-connected subgraphs are separated eventually by every single edge that separates those two subgraphs.
Furthermore, every ray~$R$ that lies eventually outside every maximal $2$-edge-connected subgraph contains infinitely many edges that are the edges of some cut of size~$1$.
Thus, we can separate $R$ from any other ray it is not equivalent to by some edge of~$R$ eventually.
Hence, $G$ is accessible.
\end{proof}

\section{Applications}

\subsection{Finitely presented groups}\label{sec_groups}

Stallings~\cite{Stallings} proved that every finitely generated group with more than one end splits as a non-trivial free product with amalgamation over a finite subgroup or as an HNN-extension over a finite subgroup.
We can continue this splitting process if one of its factors also has more than one end.
We call a group \emph{accessible} if this splitting stops after finitely many steps.
Wall~\cite{Wall-AccessibilityConjecture} conjectured that every finitely generated group is accessible.
Dunwoody proved that this is true if the group is also finitely presented~\cite{D-Access} but that it is false without the additional assumption~\cite{D-AnInaccessibleGroup}.
Here, we use our result to give a combinatorial proof of Dunwoody's accessibility theorem.

Let $G=\langle S\mid R\rangle$ be a finitely presented group.
Then any word over~$S$ in~$G$ that represents~$1$, is a finite product of relators in~$R$ or its conjugates.
Thus, in its Cayley graph $\Gamma$, every cycle and thus every element of the cycle space is the sum of the elements of the cycle space that are given by elements of~$R$ and its $G$-images.
Hence, we conclude by Theorem~\ref{thm_main} that $\Gamma$ is accessible, which in turn is equivalent to $G$ being accessible due to  Thomassen and Woess~\cite[Theorem 1.1]{ThomassenWoess}.
Note that Diekert and Wei\ss~\cite{DiekertWeiss} offered a combinatorial proof of this equivalence (its original proof applied a result due to Dunwoody~\cite{D-AccessAndGroupCohom} that used some algebraic topology).
So we have just proved combinatorially:

\begin{thm}\cite[Theorem 5.1]{D-Access}\label{thm_D_Access}
Every finitely presented group is accessible.\qed
\end{thm}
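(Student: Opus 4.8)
The plan is to deduce the statement from Theorem~\ref{thm_main} applied to a locally finite Cayley graph of the group, together with the Thomassen--Woess equivalence between group accessibility and graph accessibility.

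First, fix a finite presentation $G=\langle S\mid R\rangle$ and let $\Gamma$ be the Cayley graph of $G$ with respect to the finite generating set $S$. Since $S$ is finite, $\Gamma$ is locally finite, and the left multiplication action of $G$ on itself makes $\Gamma$ vertex-transitive; thus $\Gamma$ is a locally finite transitive graph, so it is an admissible input for Theorem~\ref{thm_main}.

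Second, I would show that the cycle space of $\Gamma$ is generated by cycles of bounded length. A closed walk in $\Gamma$ based at $1$ is spelled by a word $w$ over $S\cup S^{-1}$ with $w=1$ in $G$; as $G$ is finitely presented, $w$ equals a product of conjugates of elements of $R$ and of their inverses in the free group on $S$. Reading off this product as a concatenation of $G$-translates of the closed walks obtained by spelling the relators $r\in R$, one sees that the element of the cycle space determined by $w$ is an $\mathbb{F}_2$-sum of $G$-translates of the finitely many cycles $C_r$, $r\in R$. Since $R$ is finite, these cycles have length at most $\max_{r\in R}|r|$, so the cycle space of $\Gamma$ is generated by cycles of bounded length (in particular it is a finitely generated $\Aut(\Gamma)$-module). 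Some bookkeeping is needed here: relators need not be cyclically reduced, and a product of conjugates in the free group must be turned into an honest decomposition of the edge set of the cycle of $w$ modulo $2$; this is the standard van Kampen diagram argument, with cancellation in the free group matched by the fact that cycle-space sums discard edges traversed an even number of times.

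Third, Theorem~\ref{thm_main} now yields that $\Gamma$ is accessible, i.e.\ there is $k\in\nat$ such that any two (edge) ends of $\Gamma$ are separated by at most $k$ edges. Finally, by Thomassen and Woess~\cite[Theorem 1.1]{ThomassenWoess}, a finitely generated group is accessible in the group-theoretic sense if and only if some, equivalently every, of its locally finite Cayley graphs is accessible; applied to $\Gamma$ this gives that $G$ is accessible. If one wishes to keep the entire argument combinatorial, the direction of this equivalence used here can instead be invoked from Diekert and Wei\ss~\cite{DiekertWeiss} rather than from the cohomological proof in~\cite{ThomassenWoess}.

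The step I expect to be the main obstacle is the second one: certifying precisely that the cycle space of $\Gamma$ is module-generated (already under the $G$-action, hence under $\Aut(\Gamma)$) by the relator cycles $C_r$. Passing from ``$w$ is a product of conjugates of relators'' to ``the cycle of $w$ is an $\mathbb{F}_2$-sum of translates of the $C_r$'' requires handling free cancellation and the parity conventions of the cycle space carefully; this combinatorial accounting is exactly what replaces the usual topological argument via the presentation $2$-complex, and is where the details have to be done with care.
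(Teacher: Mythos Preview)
Your proposal is correct and follows essentially the same route as the paper: take a Cayley graph for a finite presentation, observe that the relator cycles and their $G$-translates generate the cycle space (so it is generated by cycles of bounded length), apply Theorem~\ref{thm_main} to get accessibility of the Cayley graph, and then invoke the Thomassen--Woess equivalence (with the Diekert--Wei\ss\ combinatorial alternative noted). The paper treats your ``main obstacle'' step as a one-line observation rather than spelling out the van Kampen-type bookkeeping you describe, but the structure of the argument is identical.
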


\subsection{Hyperbolic graphs}\label{sec_hyperbolic}

For $\delta\in\nat$, we call a graph $G$ \emph{$\delta$-hyperbolic} if it is connected and if for any three vertices and any three shortest paths $P_1,P_2,P_3$, one between any two of the three vertices, every vertex on~$P_1$ has distance at most~$\delta$ to some vertex on~$P_2$ or~$P_3$.
We call $G$ \emph{hyperbolic} if it is $\delta$-hyperbolic for some~$\delta\in\nat$.
A finitely generated group $\Gamma$ is \emph{hyperbolic} if one of its locally finite Cayley graphs is hyperbolic.
As hyperbolic groups are finitely presented (see~\cite{GhHaSur}), they are accessible due to Theorem~\ref{thm_D_Access}.
In this section we will prove the analogue result for quasi-transitive hyperbolic graphs.

It is not hard to show that accessibility is preserved under quasi-isometries.
The same holds for hyperbolicity (see e.\,g.~\cite{GhHaSur}).
But quasi-transitive graphs need not be quasi-isometric to some Cayley graphs due to Eskin et al.~\cite{EFW-DLnotTransitive}.
Thus, we cannot obtain the accessibility of locally finite quasi-transitive hyperbolic graphs directly from the accessibility of finitely generated hyperbolic groups.

\begin{lem}\label{lem_hyperbolic}
Let $G$ be a $\delta$-hyperbolic graph.
Then the cycles of length less than $4\delta+4$ generate its cycles space.
\end{lem}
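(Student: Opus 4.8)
The plan is to prove, by induction on $\ell=|C|$, that every cycle $C$ of $G$ lies in the span of the cycles of length at most $4\delta+3$. Since the cycle space of $G$ is by definition the span of its cycles, this yields the lemma. If $\ell\le 4\delta+3$ there is nothing to prove, so assume $\ell\ge 4\delta+4$. Write $d$ for the distance in $G$, and for a cycle $D$ write $d_D$ for the distance along $D$.

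I distinguish two cases. First suppose there are vertices $x,y$ on $C$ with $d(x,y)<d_C(x,y)$. Let $P_1,P_2$ be the two $x$--$y$ arcs of $C$, and let $Q$ be a geodesic of $G$ from $x$ to $y$. In the cycle space one has $C=(P_1+Q)+(P_2+Q)$, and each of $P_1+Q$ and $P_2+Q$ is an even subgraph; moreover, using $d(x,y)\le d_C(x,y)-1$ one checks directly that each of them has at most $\ell-1$ edges. An even subgraph with at most $\ell-1$ edges is an edge‑disjoint union of cycles, each of length at most $\ell-1$, so the induction hypothesis applies to $P_1+Q$ and to $P_2+Q$, and hence to $C$.

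It remains to rule out the second case, namely that $C$ is \emph{isometric}, i.e.\ $d(x,y)=d_C(x,y)$ for all vertices $x,y$ of $C$; I claim this already forces $\ell\le 4\delta+3$, contradicting $\ell\ge 4\delta+4$. Write $C=c_0c_1\cdots c_{\ell-1}c_0$ and set $h=\lfloor\ell/2\rfloor$. By isometry every arc of $C$ of length at most $h$ is a geodesic of $G$; in particular $A=c_0c_1\cdots c_h$ is a geodesic. Choose a second short arc of $C$ and a single edge of $C$ so that, together with $A$, they form a geodesic triangle with corners $c_0$, $c_h$ and one further vertex: concretely, the triangle on $c_0,c_h,c_{h+1}$ with sides $A$, the edge $c_hc_{h+1}$ and the arc $c_{h+1}c_{h+2}\cdots c_0$ when $\ell$ is odd, and the triangle on $c_0,c_h,c_{\ell-1}$ with sides $A$, the arc $c_hc_{h+1}\cdots c_{\ell-1}$ and the edge $c_{\ell-1}c_0$ when $\ell$ is even. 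In either case a short computation of cyclic distances shows that an interior vertex $c_j$ of $A$ (that is, $1\le j\le h-1$) has distance exactly $\min(j,h-j)$ along $C$ — hence, by isometry, in $G$ — from the union of the other two sides of the triangle. The defining property of $\delta$‑hyperbolic graphs therefore forces $\min(j,h-j)\le\delta$ for every such $j$; taking $j=\lfloor h/2\rfloor$ gives $\lfloor h/2\rfloor\le\delta$, so $h\le 2\delta+1$ and $\ell\le 4\delta+3$, as claimed. This completes the induction.

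The routine but slightly delicate part is this isometric case: one must make sure the auxiliary triangle has three genuinely distinct corners (automatic here since $\ell\ge 4\delta+4$), keep the two parities of $\ell$ apart, and carry out the cyclic‑distance computation carefully, as it is exactly this that pins the constant down to $4\delta+3$ rather than a weaker bound. Everything in the first case is elementary once one recalls that an even subgraph with few edges splits as an edge‑disjoint union of short cycles.
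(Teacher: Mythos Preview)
Your proof is correct and follows essentially the same approach as the paper: reduce to the case that $C$ is isometric (the paper phrases this as taking a shortest counterexample and observing that any shortcut would split it into two shorter cycles), then inscribe a geodesic triangle in $C$ and apply $\delta$-thinness to bound $\ell$. The only difference is cosmetic: the paper picks $x,y\in V(C)$ with $d(x,y)=2\delta+2$ and the midpoint $u$ of the $x$--$y$ arc, which immediately gives $d(u,\text{other sides})=\delta+1$ without a parity split, whereas your choice of the ``diametral'' arc $A=c_0\cdots c_h$ forces you to handle even and odd $\ell$ separately.
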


\begin{proof}
Let us suppose that this is not the case.
Then we take some cycle $C$ that cannot be written as a sum of shorter cycles and whose length is at least $4\delta+4$.
The distance between any two vertices of~$C$ is realized on~$C$ as any shortcut leads to two cycles that are shorter than~$C$ but whose sum is~$C$.
We pick $x,y,z\in V(C)$ such that $d(x,y)=2\delta+2$ and such that $z$ lies in the middle of the longer path between $x$ and~$y$ on~$C$, i.\,e.\ $|d(x,z)-d(y,z)|\leq 1$.
We pick three shortest paths, one between any two of the three vertices, such that their union is~$C$.
So~$z$ does not lie on the chosen shortest path between~$x$ and~$y$.
Let $u$ be the vertex on the shortest path between $x$ and~$y$ that has distance $\delta+1$ to~$x$ and to~$y$.
Then its distance to any vertex on the other two shortest paths is at least $\delta+1$.
This contradiction shows the assertion.
\end{proof}

Dunwoody~\cite{D-AnInaccessibleGraph} thought it likely that every transitive locally finite hyperbolic graph is accessible.
As a direct consequence of Corollary~\ref{cor_diestel} and Lemma~\ref{lem_hyperbolic}, we can confirm this:

\begin{thm}
Every locally finite quasi-transitive hyperbolic graph is accessible.\qed
\end{thm}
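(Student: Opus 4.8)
The plan is to simply assemble the two ingredients already in place, Lemma~\ref{lem_hyperbolic} and Corollary~\ref{cor_diestel}. First I would unwind the hypothesis: since $G$ is hyperbolic, by definition there is some $\delta\in\nat$ for which $G$ is $\delta$-hyperbolic, and in particular $G$ is connected. Then I would apply Lemma~\ref{lem_hyperbolic} to this $\delta$ to conclude that the cycles of length less than $4\delta+4$ generate the cycle space of~$G$. The point is that this exhibits the cycle space of~$G$ as generated by cycles of bounded length, where the bound $4\delta+3$ depends only on~$\delta$ and not on the rest of the structure of~$G$.

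With this in hand, $G$ meets exactly the hypotheses of Corollary~\ref{cor_diestel}: it is locally finite, it is quasi-transitive, and its cycle space is generated by cycles of bounded length. Applying that corollary yields directly that $G$ is accessible, which is the assertion. (If one prefers to see where the real content lies, one can instead observe that local finiteness plus quasi-transitivity gives only finitely many $\Aut(G)$-orbits of cycles of length at most $4\delta+3$, so the cycle space is a finitely generated $\Aut(G)$-module, and then invoke Theorem~\ref{thm_main3}; Corollary~\ref{cor_diestel} packages precisely this deduction.)

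I expect no genuine obstacle here, since the substantive work has already been carried out --- in Theorem~\ref{thm_main2}, which underlies Corollary~\ref{cor_diestel}, and in the hyperbolicity estimate of Lemma~\ref{lem_hyperbolic}. The only minor point worth a sentence is that ``accessible'' in the statement refers to the edge-end notion extended to arbitrary graphs defined just before Theorem~\ref{thm_main3}; for locally finite graphs this coincides with the usual (vertex-)end notion of accessibility, so nothing is lost in the translation. One could also remark that the resulting bound on the number of edges needed to separate two ends is a function of~$\delta$, but as the statement only asserts the existence of such a bound, no quantitative bookkeeping is needed.
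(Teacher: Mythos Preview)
Your proposal is correct and matches the paper's own argument exactly: the theorem is stated with a \qed\ because it is declared a direct consequence of Lemma~\ref{lem_hyperbolic} and Corollary~\ref{cor_diestel}, which is precisely the two-step deduction you outline. The parenthetical remarks about Theorem~\ref{thm_main3} and the edge-end notion are accurate but unnecessary for the proof.
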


\subsection{Planar graphs}\label{sec_planar}

A finitely generated group is \emph{planar} if it has some locally finite planar Cayley graph.
Droms~\cite{D-InfiniteEndedGroups} proved that finitely generated planar groups are accessible.
This is a hint that quasi-transitive planar graphs are also accessible and, indeed, it is true as Dunwoody~\cite{D-PlanarGraphsAndCovers} showed.
As another corollary of Theorem~\ref{thm_main2}, we will prove this combinatorially.

First, we introduce the notion of a degree sequence of orbits because the general idea to prove accessibility for locally finite quasi-transitive planar graphs will mainly be done by induction on this notion.

For a connected locally finite quasi-transitive graph $G$ with $|V(G)|>1$ we call a tupel $(d_1,\ldots, d_m)$ of positive integers with $d_i\geq d_{i+1}$ for all $i<m$ the \emph{degree sequence of the orbits of~$G$} if for some set $\{v_1,\ldots, v_m\}$ of vertices that contains precisely one vertex from each $\Aut(G)$-orbit the degree of~$v_i$ is~$d_i$.
We define an order on the finite tupels of positive integers (and thus on the degree sequences of orbits) by
\[
(d_1,\ldots, d_m)\leq (c_1,\ldots, c_n)
\]
if either $m\leq n$ and $d_i=c_i$ for all $i\leq m$ or $d_i<c_i$ for the smallest $i\leq m$ with $d_i\neq c_i$.
Note that any two finite tupels of positive integers are $\leq$-comparable.

A direct consequence of this definition is the following lemma.

\begin{lem}\label{lem_IndOnDegSequPre}
Any strictly decreasing sequence in the set of finite tupels of positive integers is finite.\qed
\end{lem}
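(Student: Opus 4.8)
The plan is to embed the relevant tuples order-preservingly into the ordinals, so that well-foundedness is inherited. The tuples that ever get compared in the sequel are degree sequences of orbits, which are non-increasing by definition, so I will argue for non-increasing tuples $(d_1,\dots,d_m)$ of positive integers (i.e.\ $d_1\ge\dots\ge d_m\ge 1$). To such a tuple assign the ordinal
\[
\alpha(d_1,\dots,d_m)\ :=\ \omega^{d_1-1}+\omega^{d_2-1}+\dots+\omega^{d_m-1}.
\]
(Because the tuple is non-increasing this becomes, after collecting equal summands, a Cantor normal form, but we use only that $\alpha$ takes ordinal values.)

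The key claim is that $\alpha$ is strictly monotone: if $(d_1,\dots,d_m)<(c_1,\dots,c_n)$ then $\alpha(d_1,\dots,d_m)<\alpha(c_1,\dots,c_n)$. I would prove this along the two cases in the definition of the order. If $(d_1,\dots,d_m)$ is a proper prefix of $(c_1,\dots,c_n)$ then
\[
\alpha(c_1,\dots,c_n)=\alpha(d_1,\dots,d_m)+\big(\omega^{c_{m+1}-1}+\dots+\omega^{c_n-1}\big),
\]
and the added ordinal is positive. Otherwise the two tuples first differ at some position $j$, necessarily with $j\le m$, $j\le n$ and $d_j<c_j$; putting $\beta:=\omega^{d_1-1}+\dots+\omega^{d_{j-1}-1}$ we have $\alpha(d_1,\dots,d_m)=\beta+\gamma$ and $\alpha(c_1,\dots,c_n)=\beta+\gamma'$ with $\gamma:=\omega^{d_j-1}+\dots+\omega^{d_m-1}$ and $\gamma'\ge\omega^{c_j-1}$. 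Since the tuple is non-increasing and $d_j\le c_j-1$, every exponent occurring in $\gamma$ is at most $c_j-2$, so $\gamma\le\omega^{c_j-2}\cdot m<\omega^{c_j-1}\le\gamma'$; as ordinal addition is strictly monotone in the right argument, $\beta+\gamma<\beta+\gamma'$.

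Granting the claim the lemma is immediate: a strictly decreasing sequence of (non-increasing) tuples is sent by $\alpha$ to a strictly decreasing sequence of ordinals, which is finite since the ordinals are well-ordered. The only real work is the key claim; the points to watch are (i)~that ``proper prefix'' and ``genuine first difference with $d_j<c_j$'' do exhaust the ways two distinct tuples can be $<$-related, and (ii)~the degenerate subcases $j=1$ (where $\beta=0$) and $d_j=1$ (where $c_j\ge 2$ and $\gamma$ is a finite ordinal). One could alternatively avoid ordinals and induct on $\max_i d_i$, peeling off the leading block of maximal entries and appealing to a lexicographic decrease, but the ordinal argument appears shortest.
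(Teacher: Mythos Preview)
Your argument is correct. The paper offers no proof here---the lemma is stated with an immediate \qed---so there is nothing to compare against in terms of approach; your ordinal embedding is a standard and clean way to establish well-foundedness, and the verification that $\alpha$ is strictly monotone in both cases is sound.

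Your restriction to non-increasing tuples is not merely a convenience but a necessary repair: the lemma as literally stated, for \emph{all} finite tuples of positive integers, is false. For instance,
\[
(2)\ >\ (1,2)\ >\ (1,1,2)\ >\ (1,1,1,2)\ >\ \cdots
\]
is an infinite strictly decreasing chain under the paper's order (at each step the first index of disagreement carries a $1$ on the left and a $2$ on the right). The non-increasing hypothesis is exactly what your bound $\gamma\le\omega^{c_j-2}\cdot m<\omega^{c_j-1}$ relies on, since it forces every exponent appearing in $\gamma$ to be at most $d_j-1\le c_j-2$. As the only application, Lemma~\ref{lem_IndOnDegSequ}, concerns degree sequences of orbits---which are non-increasing by definition---nothing is lost by your restriction.
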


Reformulated for quasi-transitive graphs and their degree sequences of orbits, it reads as follows and enables us to use induction on the degree sequence of the orbits of graphs:

\begin{lem}\label{lem_IndOnDegSequ}
Every sequence of locally finite quasi-transitive graphs whose corresponding sequence of degree sequences of the orbits is strictly decreasing is finite.\qed
\end{lem}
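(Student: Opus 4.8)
The plan is to reduce the statement at once to Lemma~\ref{lem_IndOnDegSequPre} via the assignment that sends a graph to its degree sequence of orbits. First I would note that this assignment is well defined on every graph that can occur in such a sequence: since the statement refers to the corresponding sequence of degree sequences of the orbits, each graph $G_i$ in the sequence is, by the definition above, connected, locally finite and quasi-transitive with $|V(G_i)|>1$. Local finiteness makes each vertex degree a positive integer, quasi-transitivity makes the number of $\Aut(G_i)$-orbits on $V(G_i)$ finite, and connectedness together with $|V(G_i)|>1$ forces every degree to be at least~$1$; hence, listing the orbit degrees in non-increasing order, one obtains a well-defined finite tuple of positive integers, independent of the chosen orbit transversal $\{v_1,\dots,v_m\}$.

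Given this, suppose $(G_i)_i$ is a sequence of such graphs whose associated sequence of degree sequences of orbits, say $(t_i)_i$, is strictly decreasing in the order defined above. Then $(t_i)_i$ is a strictly decreasing sequence in the set of finite tuples of positive integers, so Lemma~\ref{lem_IndOnDegSequPre} applies and $(t_i)_i$ is finite; hence the index set of $(G_i)_i$ is finite, which is the claim. Thus essentially nothing is needed beyond the previous lemma, and the only point calling for (entirely routine) care is the well-definedness discussed above; I do not expect any real obstacle.

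For completeness I would also record the short argument behind Lemma~\ref{lem_IndOnDegSequPre}, since that is where the actual content sits. One embeds the degree sequences order-preservingly into the ordinals by sending $(d_1,\dots,d_m)$ with $d_1\geq\dots\geq d_m\geq 1$ to $\omega^{d_1}+\dots+\omega^{d_m}$, an expression that is already a Cantor normal form precisely because the tuple is non-increasing. If $(d_1,\dots,d_m)<(c_1,\dots,c_n)$ holds because the first tuple is a proper prefix of the second, the image increases by the addition of a nonzero ordinal. If instead $d_i=c_i$ for $i<j$ and $d_j<c_j$, then the common initial ordinal $\omega^{d_1}+\dots+\omega^{d_{j-1}}$ is the same for both images, while the remaining part $\omega^{d_j}+\dots+\omega^{d_m}$ of the first image has all exponents at most $d_j$, hence is strictly below $\omega^{d_j+1}\leq\omega^{c_j}$, and the remaining part of the second image is at least $\omega^{c_j}$; since left addition of ordinals is strictly monotone in its right argument, the images compare the same way. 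As the ordinals are well-ordered, no strictly decreasing sequence of degree sequences can be infinite. The one thing one must not overlook here is that the tuples under consideration are non-increasing --- this is exactly what makes the ordinal encoding work, and it is guaranteed by the definition of a degree sequence of orbits.
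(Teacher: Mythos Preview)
Your proposal is correct and follows the same approach as the paper: reduce directly to Lemma~\ref{lem_IndOnDegSequPre} via the map ``graph $\mapsto$ degree sequence of orbits''. The paper in fact gives no proof at all for either lemma (both carry only a \qed), treating Lemma~\ref{lem_IndOnDegSequ} as an immediate reformulation of Lemma~\ref{lem_IndOnDegSequPre}; your additional well-definedness discussion and the ordinal embedding $(d_1,\dots,d_m)\mapsto \omega^{d_1}+\cdots+\omega^{d_m}$ for Lemma~\ref{lem_IndOnDegSequPre} are correct extras that the paper omits.
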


\begin{lem}\label{lem_SmallerDegSequ}
Let $G$ be a locally finite quasi-transitive graph and let $S\sub V(G)$ be such that $G-S$ is disconnected, such that each $S\alpha$ with $\alpha\in\Aut(G)$ meets at most one component of $G-S$, and such that no vertex of~$S$ has all its neighbours in~$S$.
Let $H$ be a maximal subgraph of~$G$ such that no $S\alpha$ with $\alpha\in\Aut(G)$ disconnects~$H$.
Then the degree sequence of the orbits of~$H$ is smaller than the one of~$G$.
\end{lem}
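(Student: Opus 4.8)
The plan is to analyse how the orbits of $G$ split when we pass to the subgraph $H$. First I would observe that $H$ is obtained from $G$ by the standard tree-decomposition-like construction along the nested system of separators $\{S\alpha : \alpha\in\Aut(G)\}$: since no $S\alpha$ meets more than one component of $G-S$, the translates of $S$ are ``parallel'' in the sense needed for them to organise $G$ into pieces, and $H$ is one such piece. The hypothesis that no vertex of $S$ has all its neighbours in $S$ guarantees that every vertex of $S$ survives in $H$ with at least one incident edge, so $V(S)\subseteq V(H)$ and $H$ is again quasi-transitive under $\Aut(G)_H$ (the setwise stabiliser), with $|V(H)|>1$.

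Next I would compare orbits. Let $v_1,\dots,v_m$ be orbit representatives of $G$ with degrees $d_1\geq\cdots\geq d_m$. Every vertex of $H$ lies in some $\Aut(G)$-orbit, so its $\Aut(H)$-orbit is contained in one of the $G$-orbits; hence the set of $H$-orbit-degrees is obtained from the multiset $\{d_1,\dots,d_m\}$ by possibly (i) deleting some values entirely (orbits of $G$ with no vertex in $H$, or whose vertices in $H$ all lie in separators that got removed — though by the neighbour hypothesis this cannot happen), (ii) splitting a single $G$-orbit into several $H$-orbits, and (iii) lowering degrees: a vertex of $G$ that lies in some separator $S\alpha$ loses exactly those incident edges that leave the component of $G-S\alpha$ containing $H$, and crucially it loses at least one such edge, because the separators actually disconnect $G$ and $H$ is a proper piece. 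The key quantitative point is therefore: \emph{at least one vertex of $H$ has strictly smaller degree in $H$ than in $G$}, namely any vertex lying on the ``boundary'' $S\alpha$ used in the maximal subgraph construction — such a vertex exists since $H$ is maximal with the stated property, so some translate $S\alpha$ touches $H$ and separates it from a nonempty rest of $G$.

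To finish, I would feed this into the definition of the order on degree sequences. Write the degree sequence of $H$ as the sorted tuple $(c_1,\dots,c_n)$. Each $c_j$ equals some $d_{i(j)}$ possibly decreased, and the decreases are not all zero; moreover no degree can increase (edges are only removed). Sort both tuples in non-increasing order and compare at the first coordinate where they differ: at that coordinate the $H$-value is strictly smaller, because up to that point the sorted degrees must agree (an increase being impossible) and the first genuine discrepancy is forced to be a strict drop. This is exactly the condition ``$d_i<c_i$ for the smallest $i$ with $d_i\neq c_i$'' in the displayed order, so the degree sequence of the orbits of $H$ is strictly smaller than that of $G$.

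The main obstacle I anticipate is bookkeeping case (ii): when a single $G$-orbit splits into several $H$-orbits, the sorted $H$-tuple can be longer than the $G$-tuple and can repeat a value $d_i$ several times, so one must be careful that the comparison is still made at a coordinate where a strict decrease occurs rather than where a mere repetition appears. The clean way around this is to argue as follows: let $d$ be the \emph{largest} degree of any vertex of $G$ that lies on some separator $S\alpha$ meeting $H$ and loses an edge in $H$; all vertices of $G$ of degree $>d$ keep all their edges and their orbits do not split (any automorphism fixing $H$ setwise that we would need is available because those vertices see the same neighbourhood in $H$ as in $G$), so the two sorted tuples agree on all coordinates with value $>d$; at the first coordinate with value $d$ in the $G$-tuple, the $H$-tuple has a strictly smaller value there — or the value $d$ simply fails to appear that early — giving the required strict inequality. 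I would spell this last comparison out carefully, as it is the only genuinely delicate point; everything else is the routine verification that $H$ inherits quasi-transitivity and that no degree goes up.
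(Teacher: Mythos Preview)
Your overall strategy matches the paper's: compare the sorted degree tuples, note that degrees can only drop, and isolate the largest degree at which an actual drop occurs. The paper carries this out in exactly the same way.

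There is, however, a real gap at the point you yourself flag as delicate. You assert that for vertices of degree $>d$ ``their orbits do not split (any automorphism fixing $H$ setwise that we would need is available because those vertices see the same neighbourhood in $H$ as in $G$)''. Seeing the same neighbourhood in $H$ as in $G$ is a local statement about a single vertex; it does \emph{not} by itself force a given $\alpha\in\Aut(G)$ with $x\alpha=y$ to satisfy $H\alpha=H$. One needs a global argument, and this is precisely where the hypothesis ``no vertex of $S$ has all its neighbours in $S$'' enters --- a hypothesis you invoke only for the (incorrect, and unnecessary) claim that $V(S)\subseteq V(H)$. The paper argues as follows: if $x,y\in H$ lie in the same $\Aut(G)$-orbit, have full degree in $H$, and $\alpha\in\Aut(G)$ maps $x$ to $y$ but $H\alpha\neq H$, then by maximality of $H$ some translate $S\beta$ separates a vertex of $H$ from a vertex of $H\alpha$. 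Since $y$ together with all its neighbours lies in $H$ (full degree in $H$) and in $H\alpha$ (it is the $\alpha$-image of $x$, which has full degree in $H$), the whole set $\{y\}\cup N(y)$ must lie in $S\beta$ --- contradicting the hypothesis applied to $S\beta$. Without this step your claim that full-degree orbits do not split is unproved, and the final tuple comparison cannot be completed.

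Once you supply this argument, the rest of your outline coincides with the paper's proof.
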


\begin{proof}
First we show that all vertices in~$H$ that lie in a common $\Aut(G)$-orbit of~$G$ and whose degrees in~$G$ and in~$H$ are the same also lie in a common $\Aut(H)$-orbit.
Let $x,y$ be two such vertices and $\alpha\in\Aut(G)$ with $x\alpha=y$.
Suppose that $H\alpha\neq H$.
Then there is some $S\beta$ that separates some vertex of~$H$ from some vertex of~$H\alpha$ by the maximality of~$H$.
But as $y$ and all its neighbours lie in $H$ and in $H\alpha$, they lie in $S\beta$, which is a contradiction to our assumption.
Thus, $\alpha$ fixes~$H$.

We consider vertices $x$ such that $\{x\}\cup N(x)$ lies in no $H\alpha$ with $\alpha\in\Aut(G)$ and such that $x$ has maximum degree with this property.
Let $\{x_1,\ldots, x_m\}$ be a maximal set that contains precisely one vertex from each orbit of those vertices.
If $x_i$ lies outside every $H\alpha$, then no vertex of its orbit is considered for the degree sequence of the orbits of~$H$.
If $x_i$ lies in~$H$, then its degree in some $H\alpha$, say in~$H$, is smaller than its degree in~$G$.
So its value in the degree sequence of orbits of~$H$ is smaller than its value in the degree sequence of orbits of~$G$; but it may be counted multiple times now as the $\Aut(G)$-orbit containing $x_i$ may be splitted into multiple $\Aut(H)$-orbits.
Therefore, we have shown that the degree sequence of orbits of~$H$ is smaller than that of~$G$.
\end{proof}

In order to prove that locally finite quasi-transitive planar graphs are accessible, we reduce the problem of accessibility to the $3$-connected such graphs and then prove the accessibility result for VAP-free planar graphs, where a planar graphs is \emph{VAP-free} if is has an embedding in the plane such that no point of the plane is an accumulation point of the vertex set.
Then we shall prove the result for planar graphs that are not necessarily VAP-free.

Note that the reduction to the $3$-connected case does not use the assumption of planarity.
Thus, it is also applicable in other situations.
The method of this reduction is similar to the method in the proof of Theorem~\ref{thm_main4}, but with edges instead of vertices.
Remember that a \emph{block} of a graph is a maximal $2$-connected subgraph.

For quasi-transitive locally finite graphs, accessibility is equivalent to the existence of some $m\in\nat$ such that any two ends can be separated by removing at most $m$ vertices.
We will use this fact in our proofs without mentioning it there explicitly.\looseness-1

\begin{prop}\label{prop_red1to2}
A locally finite quasi-transitive graph is accessible if and only if each of its blocks is accessible.
\end{prop}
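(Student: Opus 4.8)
The plan is to prove both directions separately, with the forward direction being essentially trivial and the backward direction carrying the content.

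For the forward direction, suppose $G$ is accessible and let $B$ be a block of $G$. Two ends of $B$ are in particular ends of $G$ (any ray in $B$ is a ray in $G$), and since $B$ is an induced subgraph obtained by deleting vertices, a vertex set of size at most $m$ separating two ends in $G$ induces, after intersecting with $V(B)$, a separator of size at most $m$ in $B$. One must check that rays of $B$ that lie in different ends of $B$ also lie in different ends of $G$; this uses that $B$ is $2$-connected (a block), so a single cut vertex of $G$ cannot separate two rays inside $B$, and more carefully that the $G$-end of a ray in $B$ determines and is determined by its $B$-end. Actually the cleanest formulation: if $R, R'$ are rays in $B$ lying in distinct ends of $B$, they are separated in $B$ by some finite vertex set, hence (since $B$ is $2$-connected and any bridge out of $B$ hangs off a single vertex) they are separated in $G$ by the same finite set together with at most the relevant cut vertices — so accessibility of $G$ with bound $m$ gives accessibility of $B$ with bound $m$ (or $m$ plus a constant).

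For the backward direction, assume every block of $G$ is accessible; since $G$ is quasi-transitive and locally finite, there are only finitely many $\Aut(G)$-orbits of blocks, and one can choose a uniform bound $m$ such that any two ends of any block of $G$ are separated by at most $m$ vertices. Now I would mimic the proof of Theorem~\ref{thm_main4}, but with blocks in place of maximal $2$-edge-connected subgraphs and vertices in place of edges. Given two ends $\omega, \omega'$ of $G$ represented by rays $R, R'$: (i) if $R$ and $R'$ eventually lie in a common block $B$, then they are distinct ends of $B$ and hence separated by at most $m$ vertices; (ii) if they eventually lie in different blocks, then since the block-cut-vertex tree structure of $G$ is a tree, there is a single cut vertex whose removal separates the two blocks, hence separates $\omega$ from $\omega'$; (iii) if $R$ eventually leaves every block — i.e. $R$ eventually runs through bridges only — then $R$ passes through infinitely many cut vertices each of which is a $1$-vertex separator isolating a tail of $R$, so $R$ can be separated from any non-equivalent ray by a single vertex. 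In all cases the separator has size at most $m$, so $G$ is accessible.

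The main obstacle I expect is the bookkeeping in case~(iii) and the interface between cases, namely making precise the claim that the blocks and cut vertices of $G$ organise into a tree (the block-cut tree) in such a way that "eventually in a common block" versus "eventually in different blocks" versus "eventually in no block" is an exhaustive and well-defined trichotomy for a ray, and that in the middle case a single cut vertex does the job. A ray need not be eventually inside one block, but it \emph{is} true that a ray either has a tail inside a single block or traverses infinitely many distinct blocks; in the latter situation its trace in the block-cut tree is a ray of that tree, and comparing two such rays reduces to separating two ends (or an end and a finite part) of a tree, which is trivial. Getting the uniform bound $m$ over all blocks requires only quasi-transitivity of $G$ together with the observation (as in Theorem~\ref{thm_main4}, via the finite generating set $\CF$ of the cycle space) that there are finitely many orbits of blocks and that each block is itself quasi-transitive under its own automorphism group — so Theorem~\ref{thm_main3} applies to each and the finitely many resulting bounds can be maximised.
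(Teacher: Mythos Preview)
Your overall strategy is the paper's strategy: the forward direction is trivial, and for the converse you use that there are only finitely many $\Aut(G)$-orbits of blocks, fix a uniform bound~$m$, and then run a case analysis on whether two given rays eventually lie in a common block, in distinct blocks, or (for at least one of them) in no block, using the block--cutvertex tree. Your cases (i)--(iii) and the discussion of the trichotomy are essentially what the paper does.

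The error is in your final paragraph, where you try to justify the uniform bound~$m$. You invoke ``the finite generating set~$\CF$ of the cycle space'' and then Theorem~\ref{thm_main3}. Neither is available here: Proposition~\ref{prop_red1to2} has no cycle-space hypothesis, and the accessibility of each block is not something to be \emph{proved} via Theorem~\ref{thm_main3} --- it is the \emph{hypothesis} of the backward direction. The correct (and much simpler) argument for finitely many orbits of blocks is the one the paper gives: quasi-transitivity and local finiteness give finitely many $\Aut(G)$-orbits of edges, and every block is uniquely determined by any one of its edges, so there are finitely many $\Aut(G)$-orbits of blocks. Since blocks in the same orbit are isomorphic, the hypothesised accessibility bounds of the blocks take only finitely many values, and~$m$ is their maximum. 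No reference to~$\CF$, to Theorem~\ref{thm_main3}, or to quasi-transitivity of individual blocks is needed.
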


\begin{proof}
Let $G$ be a locally finite quasi-transitive graph.
If it is accessible, then, obviously, each of its blocks must also be accessible.
For the converse, note that every block is uniquely determined by any of its edges.
As there are only finitely many $\Aut(G)$-orbits on the vertices of~$G$, the same holds for the $\Aut(G)$-orbits on the edges.
Thus, there are only finitely many $\Aut(G)$-orbits of blocks.
As each block is accessible, we find some $n\in\nat$ such that any two ends of any common block are separable by at most $n$ vertices.
Remember that the blocks of~$G$ are arranged in a tree-like way (cf.\ the block-cutvertex tree).

Let $R_1,R_2$ be two rays of distinct ends.
If both rays lie eventually in the same block, then we find some separator of at most~$n$ vertices that separates the rays eventually.
If the rays lie eventually in distinct blocks, then they are eventually separated by some $1$-separator that separates these two blocks.
So we may assume that one of the two rays, say $R_1$, does not lie in any block eventually.
Then $R_1$ determines a ray in the block-cutvertex tree \cT\ such that it contains vertices from each block of this ray.
If $R_2$ determines the same ray in~\cT, then $R_1$ and $R_2$ must pass through infinitely many common $1$-separators.
But then, they lie in the same end contrary to their choice.
Thus, $R_2$ does not determine the same ray in~\cT\ as~$R_1$ does and hence, there is some $1$-separator that separates them eventually.
This shows the existence of some $m\in\nat$ such that any two ends are separated by at most~$m$ vertices.
Thus, $G$ is accessible.
\end{proof}

\begin{rem}
In the situation of Proposition~\ref{prop_red1to2} we can take the orbits of the cutvertices one-by-one and apply Lemma~\ref{lem_SmallerDegSequ} for each such orbit.
It follows recursively that each block has a smaller degree sequence of its orbits than the original graph.
\end{rem}

For the reduction to the $3$-connected case for graphs of connectivity~$2$, we apply Tutte's decomposition of $2$-connected graphs into `$3$-connected parts' and cycles.
Tutte~\cite{Tutte} proved it for finite graphs.
Later, it was extended by Droms et al.~\cite{DSS-Tutte} to locally finite graphs.

A \emph{\td} of a graph $G$ is a pair $(T,\cV)$ of a tree $T$ and a family $\cV=(V_t)_{t\in T}$ of vertex sets $V_t\sub V(G)$, one for each vertex of~$T$, such that
\begin{enumerate}[(T1)]
\item $V = \bigcup_{t\in T}V_t$;
\item for every edge $e\in G$ there exists a $t \in V(T)$ such that both ends of $e$ lie in~$V_t$;
\item $V_{t_1} \cap V_{t_3} \sub V_{t_2}$ whenever $t_2$ lies on the $t_1$--$t_3$ path in~$T$.
\end{enumerate}

The sets $V_t$ are the \emph{parts} of $(T,\cV)$ and the intersections $V_{t_1}\cap V_{t_2}$ for edges $t_1t_2$ of~$T$ are its \emph{adhesion sets}; the maximum size of such a set is the \emph{adhesion} of $(T,\cV)$.
Given a part $V_t$, its \emph{torso} is the graph with vertex set $V_t$ and whose edge set is
\[
\{xy\in E(G)\mid x,y\in V_t\}\ \cup\ \{xy\mid \{x,y\}\sub V_t\text{ is an adhesion set}\}.
\]

The automorphisms of~$G$ act canonically on vertex sets of~$G$.
If every part of the \td\ is mapped to another of its parts and this map induces an automorphism of~$T$ then we call the \td\ \emph{$\Aut(G)$-invariant}.

\begin{thm}\cite[Theorem 1]{DSS-Tutte}\label{thm_tutte}
Every locally finite $2$-connected graph $G$ has an $\Aut(G)$-invariant \td\ of adhesion~$2$ each of whose torsos is either $3$-connected or a cycle or a complete graph on two vertices.

In addition, there is a unique such \td\ that has the property that no two torsos corresponding to adjacent vertexs of the \td\ are cycles.
\qed
\end{thm}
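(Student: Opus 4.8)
The plan is to build the \td\ from a canonical nested set of $2$-separations of $G$ and to obtain the cycle torsos from its ``crossing'' part, adapting Tutte's argument for finite graphs to the locally finite setting.

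First I would fix the objects to work with. Since $G$ is $2$-connected, every separation $(A,B)$ of $G$ of order at most~$2$ with $A\sm B\neq\es\neq B\sm A$ has separator of size exactly~$2$, and by moving components of $G$ minus the separator between the two sides we may assume both $G[A]$ and $G[B]$ connected; call such separations \emph{tight}. A tight $2$-separation is then given by a $2$-separator together with a partition of the finitely many (here local finiteness enters for the first time) components of $G$ minus that separator into two nonempty classes. Two tight $2$-separations $(A,B)$ and $(C,D)$ are \emph{nested} if one of $A\sub C$, $A\sub D$, $B\sub C$, $B\sub D$ holds, and \emph{cross} otherwise. The basic tool is the \emph{corner lemma}: if $(A,B)$ and $(C,D)$ cross, then by submodularity of the vertex-separation order together with $2$-connectedness the corner separations $(A\cap C,B\cup D)$, $(A\cap D,B\cup C)$, $(B\cap C,A\cup D)$, $(B\cap D,A\cup C)$ all have order exactly~$2$, and the separators involved arrange themselves cyclically.

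Next I would study the \emph{crossing classes}, the components of the graph on the tight $2$-separations in which two are joined exactly when they cross. Iterating the corner lemma inside one class, and using local finiteness to guarantee that between any two of its separators there is a ``next'' one, I would show that each crossing class is \emph{cyclic}: its separators carry a finite or bi-infinite cyclic order, consecutive separators bound a region whose boundary has order~$2$, and each separation of the class cuts $G$ along two such regions---exactly the virtual-edge pattern of a cycle. Creating one decomposition node per crossing class, with torso the corresponding cycle (regions contracted to virtual edges), one finds that the tight $2$-separations delimiting a crossing class are nested with every tight $2$-separation outside it. Let $\mathcal N$ be the union over all crossing classes of these delimiters together with all tight $2$-separations in no crossing class; then $\mathcal N$ is a nested, $\Aut(G)$-invariant set of separations, the invariance being automatic since every preceding step was canonical.

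Finally I would run the standard procedure turning a nested set of separations into a \td: the parts are the maximal vertex sets split by no member of $\mathcal N$, tree-adjacency is along the members of $\mathcal N$, and (T1)--(T3) as well as adhesion~$\le 2$ are routine; $\Aut(G)$-invariance is inherited from that of $\mathcal N$. Each torso is then a cycle---from a crossing-class node---or a torso containing no tight $2$-separation, and the latter are easily seen to be $3$-connected or, in degenerate cases, a $K_2$ (a triangle being counted as a cycle); this is the list in the theorem. For the addendum, two adjacent cycle torsos sharing a virtual edge can be merged into a single cycle by identifying that edge, and carrying out all such merges shrinks $\mathcal N$ to a canonical subset $\mathcal N'$; conversely the ``no two adjacent cycle torsos'' requirement forces any admissible \td\ to use precisely the separations in $\mathcal N'$, which gives uniqueness. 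The main obstacle is the cyclicity of the crossing classes in the third step: this is the genuine content of Tutte's wheel/cycle dichotomy, and it is exactly here that local finiteness is indispensable, since without it a $2$-connected graph may carry families of $2$-separations whose crossing pattern is not cyclic---which is why the hypothesis cannot be dropped.
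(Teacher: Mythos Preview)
The paper does not prove this theorem: it is quoted verbatim from Droms--Servatius--Servatius~\cite{DSS-Tutte} and closed with a \qed, so there is no in-paper argument to compare your proposal against. Your sketch is essentially the standard Tutte approach as extended to the locally finite case in~\cite{DSS-Tutte}: build the canonical nested family of $2$-separations, show that the crossing equivalence classes are cyclic (this is indeed the crux and the place where local finiteness---via the fact that each vertex lies in only finitely many $2$-separators, cf.\ \cite[Proposition~4.2]{ThomassenWoess}---is genuinely used), turn the nested family into a tree, and merge adjacent cycle torsos for uniqueness.

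One small point to tighten: your first invocation of local finiteness (``finitely many components of $G$ minus a $2$-separator'') already uses $2$-connectedness as well, since each such component must send an edge to both separator vertices. More substantively, the passage ``iterating the corner lemma \ldots\ I would show that each crossing class is cyclic'' hides the real work: one must rule out that a crossing class branches, i.e.\ that three pairwise-crossing separations can occur without two of them being ``on the same side'' of the third, and this needs a careful case analysis rather than just iteration of the corner lemma. As a sketch this is fine, but if you intend an actual proof you should either spell out that step or simply cite~\cite{DSS-Tutte} as the paper does.
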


We call a \td\ as in the additional statement of Theorem~\ref{thm_tutte} \emph{Tutte's decomposition}.
The idea of the following lemma is basically that of Proposition~\ref{prop_red1to2} but with Tutte's decomposition instead of the block-cutvertex tree.

\begin{prop}\label{prop_red2to3}
A locally finite quasi-transitive $2$-connected graph is accessible if and only if each of its torsos in Tutte's decomposition is accessible.
\end{prop}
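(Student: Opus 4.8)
The plan is to imitate the proof of Proposition~\ref{prop_red1to2}, with Tutte's decomposition $(T,\cV)$ from Theorem~\ref{thm_tutte} in the role of the block-cutvertex tree and with the adhesion sets — which here all have size exactly~$2$, since a size-$1$ adhesion set would be a cutvertex, contradicting $2$-connectivity — in the role of the cutvertices. First I would record the preliminaries: since the decomposition is $\Aut(G)$-invariant of adhesion~$2$ and $G$ is quasi-transitive, there are only finitely many $\Aut(G)$-orbits of parts (equivalently of torsos), each torso is again locally finite and quasi-transitive, and it suffices to bound the number of vertices needed to separate two ends (as remarked before Proposition~\ref{prop_red1to2}). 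Moreover every end $\omega$ of~$G$ orients each edge $e$ of~$T$ towards the side of the adhesion set of~$e$ that contains a tail of some ray of~$\omega$; these orientations are consistent, so $\omega$ either lives at a single node $t_\omega$ of~$T$ or follows a ray of~$T$. An end living at a node~$t$ induces, after rerouting every excursion of a ray into a branch to the corresponding real or virtual edge, an end of the torso~$H_t$, and two distinct ends of~$G$ living at~$t$ induce distinct ends of~$H_t$.

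For the forward implication, assume $G$ is accessible, witnessed by $m\in\nat$, and let $H$ be the torso of a part~$V_t$. Lift any ray of~$H$ to a ray of~$G$ by replacing each virtual edge $xy$ by an $x$--$y$ path through the branch hanging off the corresponding adhesion set; then distinct ends of~$H$ lift to distinct ends of~$G$, because a finite separator of them inside~$V_t$ never splits an adhesion set and hence also separates the lifts. So two given ends of~$H$ lift to ends of~$G$ that are separated by some $S\sub V(G)$ with $|S|\le m$. Replacing every vertex of~$S$ outside~$V_t$ by the two vertices of the adhesion set shielding the branch that contains it produces a set $S'\sub V_t$ with $|S'|\le 2m$; lifting a hypothetical $H-S'$ path between the two ends back to a $G-S$ path shows that $S'$ already separates them in~$H$. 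Hence $H$ is accessible.

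For the converse, assume every torso is accessible; since there are only finitely many orbits of torsos, there is an $n\in\nat$ such that any two ends of any single torso are separated by at most~$n$ vertices. Let $R_1,R_2$ be rays of distinct ends $\omega_1,\omega_2$. If $\omega_1$ and $\omega_2$ both live at the same node~$t$, they induce distinct ends of~$H_t$, which are separated by at most~$n$ vertices of~$V_t$; these also separate $\omega_1$ from~$\omega_2$ in~$G$, since each branch attaches to the rest of~$G$ only through its (size-$2$) adhesion set. Otherwise the orientations of~$T$ belonging to $\omega_1$ and $\omega_2$ disagree on some edge $e=t_1t_2$, and then the adhesion set of~$e$ separates $\omega_1$ from~$\omega_2$ — unless both ends follow the same ray $s=t_1t_2\dots$ of~$T$. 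In that remaining case the vertex sets $\bigcup_{t\in B_i}V_t$, where $B_i$ is the component of $T-t_it_{i+1}$ avoiding~$t_i$, decrease to the empty set, so a finite separator of $\omega_1,\omega_2$ would avoid some such set minus its bounding adhesion set~$S_i$, forcing the tails of $R_1,R_2$ either into one component of~$G$ minus that separator (impossible) or into distinct components of $G-S_i$; so again at most~$2$ vertices separate them. In all cases $\omega_1$ and $\omega_2$ are separated by at most $\max\{n,2\}$ vertices, so $G$ is accessible.

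I expect the main obstacle to be the last case of the converse — ruling out that a single ray of the decomposition tree carries two ends of~$G$ that are not already cheaply separated — together with the careful bookkeeping needed to translate separators of~$G$ into separators of the torsos and back, across the branches and their adhesion sets.
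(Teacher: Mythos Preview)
Your plan is essentially the paper's: imitate Proposition~\ref{prop_red1to2} with Tutte's decomposition in place of the block--cutvertex tree. The paper packages this a little differently by first passing to the auxiliary graph $H=G$ plus all virtual edges, so that the torsos become honest subgraphs of~$H$, and then quoting the block argument for~$H$; you work directly with $G$ and the torsos. Both routes are fine.

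Two points to tighten. First, your justification that distinct torso ends lift to distinct ends of~$G$ --- ``a finite separator of them inside $V_t$ never splits an adhesion set'' --- is simply false: a separator in the torso may well contain exactly one vertex of an adhesion set. Fortunately you do not need this claim. If a $G$-path between the lifted rays avoids~$S$ and makes an excursion into a branch with adhesion set $\{x,y\}$, then either both $x,y$ lie outside~$S$ and the excursion collapses to the virtual edge $xy$, or at most one of $x,y$ lies outside~$S$ and the excursion enters and leaves through that single vertex, hence can be deleted. Either way one gets a walk in $H_t-S$, contradiction. Second, your claim that the sets $\bigcup_{t\in B_i}V_t$ decrease to the empty set, and the tacit claim that torsos are locally finite, both rest on the fact that every vertex of a locally finite graph lies in only finitely many $2$-separators; the paper cites this explicitly (Thomassen--Woess, Proposition~4.2), and you should too, since without it a vertex could belong to infinitely many adhesion sets and the argument collapses.
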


\begin{proof}
Let $G$ be a locally finite quasi-transitive $2$-connected graph and $(T,\cV)$ be Tutte's decomposition of~$G$.
Note that every vertex lies in only finitely many $2$-separators (cf.~\cite[Proposition 4.2]{ThomassenWoess}).
Thus, the graph $H$ given by~$G$ together with all edges $xy$, where $\{x,y\}$ forms an adhesion set, is also locally finite and quasi-transitive.
There are only finitely many orbits of (the action induced by) $\Aut(G)$ on~$T$, since any $2$-separator of~$G$ uniquely determines the parts $V_t$ of~$(T,\cV)$ it is contained in and since there are only finitely many $\Aut(G)$-orbits of $2$-separators.\looseness-1

Let $R$ be a ray in~$G$.
If there is some part $V_t$ of~$(T,\cV)$ that is visited infinitely often by~$R$, then consider the following ray in~$H$:
if $R$ leaves~$V_t$ via some $2$-separator $\{x,y\}$, then $R$ must contain both vertices of that adhesion set as it must enter $V_t$ again, which is only possible via the second vertex of $\{x,y\}$.
So if we remove the subpath of~$R$ between $x$ and~$y$ for any such adhesion set $\{x,y\}$, then due to local finiteness we still have a ray in~$H$, which lies in the torso induced by~$V_t$.
So $R$ defines an end of that torso.
Furthermore, if we have a ray in some $3$-connected subgraph $F$ of~$H$, then we reverse the above process to obtain some ray of~$G$.
(We replace the edge $xy$ by some $x$-$y$ path outside of~$F$.)
For two equivalent rays in~$H$, the corresponding rays in~$G$ are still equivalent, so lie in the same end of~$G$.
Thus, we have obtained a canonical bijection between the ends of~$G$ and those of~$H$.
As~$G$ is a subgraph of~$H$, it suffices to prove that $H$ is accessible if and only if each of its maximal $3$-connected subgraphs is accessible.

Obviously, accessiblity of~$H$ implies accessiblity of each maximal $3$-connected subgraph of~$H$.
So let us assume that every maximal $3$-connected subgraph is accessible.
Let $R_1,R_2$ be two non-equivalent rays of~$H$.
By case analysis as in the proof of Proposition~\ref{prop_red1to2}, we obtain the assertion.
\end{proof}

\begin{rem}\label{rem_TutteForInduc}
Unfortunately, we are not able to apply Lemma~\ref{lem_SmallerDegSequ} directly for Proposition~\ref{prop_red2to3} to see that the torsos in Tutte's decomposition have a smaller degree sequence of orbits, as the orbits are not subgraphs of~$G$.
But as not both vertices of any adhesion set have degree~$2$, it is possible to follow the argument of the proof of Lemma~\ref{lem_SmallerDegSequ} for each of the finitely many orbits of the $2$-separators one-by-one to see that each torso has a smaller degree sequence of orbits than~$G$.
\end{rem}

Now we are able to prove the accessibility of quasi-transitive planar graphs in the case of VAP-freeness.

\begin{prop}\label{prop_VAPFreeAccess}
Every locally finite quasi-transitive VAP-free planar graph is accessible.
\end{prop}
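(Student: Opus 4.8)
The plan is to use the two structural reductions of this subsection to pass to the case where $G$ is $3$-connected, and there to exhibit a finite set of face boundaries whose $\Aut(G)$-orbits generate the cycle space, so that Theorem~\ref{thm_main3} applies directly. First I would check that all the reductions preserve the hypotheses. A block of $G$ is a subgraph, so it inherits a VAP-free planar embedding, is locally finite, and, as in the proof of Proposition~\ref{prop_red1to2}, there are only finitely many $\Aut(G)$-orbits of blocks, so each block is quasi-transitive. In the setting of Proposition~\ref{prop_red2to3}, each maximal $3$-connected subgraph of the auxiliary graph $H$ is obtained from a torso of Tutte's decomposition by adhesion edges, and realising each adhesion edge $xy$ by an $x$--$y$ path through the corresponding part exhibits it as a topological minor of $G$; hence it is again locally finite, VAP-free planar, and (again counting orbits of torsos) quasi-transitive. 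By Proposition~\ref{prop_red1to2} it suffices to treat blocks, and by Proposition~\ref{prop_red2to3} it then suffices to treat the torsos of Tutte's decomposition of a $2$-connected block. Torsos that are complete graphs on two vertices or (finite or infinite) cycles have at most two ends, separated by a single vertex, hence are accessible; so we may assume $G$ is a locally finite quasi-transitive $3$-connected VAP-free planar graph.

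For such $G$ the planar embedding is essentially unique (Whitney's theorem in the finite case, and its extension to locally finite $3$-connected graphs), so $\Aut(G)$ permutes the faces of $G$, and every face is bounded by a cycle, finite exactly when the face is bounded. The key claim is that the boundaries of the bounded faces generate the cycle space. Given a finite cycle $C$ in $G$, it bounds a closed disc $D$ in the plane; since $G$ is locally finite and its embedding VAP-free, the compact set $D$ meets only finitely many vertices and edges, hence contains only finitely many faces, each of them bounded. Summing the boundaries of these faces over $\mathbb F_2$ gives the boundary of $D$, which is $C$; this proves the claim. Finally there are only finitely many $\Aut(G)$-orbits of bounded faces: each bounded face is incident with some vertex, each vertex is incident with only finitely many faces (as $G$ is locally finite), and there are only finitely many $\Aut(G)$-orbits of vertices. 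Picking one boundary cycle from each orbit of bounded faces yields a finite set of cycles whose $\Aut(G)$-images generate the cycle space.

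Thus the cycle space of $G$ is a finitely generated $\Aut(G)$-module, so $G$ is accessible by Theorem~\ref{thm_main3} (alternatively, one can observe that the bounded faces now have bounded size and invoke Corollary~\ref{cor_diestel}). The step I expect to be the main obstacle is the claim that the bounded face boundaries generate the cycle space: it rests on the plane-topological fact that a finite cycle equals the $\mathbb F_2$-sum of the boundaries of the faces it encloses, together with the observation — this is exactly where VAP-freeness and local finiteness enter — that only finitely many faces are enclosed. A secondary technical point requiring care is that passing to blocks and to the torsos of Tutte's decomposition keeps the graph locally finite, quasi-transitive and VAP-free planar; for the torsos the description as topological minors of $G$ is what makes this work.
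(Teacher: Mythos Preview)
Your proposal is correct and follows essentially the same line as the paper's proof: reduce to the $3$-connected case via Propositions~\ref{prop_red1to2} and~\ref{prop_red2to3}, use the uniqueness of the embedding so that $\Aut(G)$ permutes faces, observe that VAP-freeness forces each cycle to enclose only finitely many faces and hence to be the sum of finite face boundaries, and conclude via Corollary~\ref{cor_diestel} (equivalently Theorem~\ref{thm_main3}). Your extra care in checking that the reductions preserve local finiteness, quasi-transitivity and VAP-free planarity---in particular the topological-minor description of the torsos---is a point the paper passes over in silence.
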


\begin{proof}
Due to Propositions~\ref{prop_red1to2} and~\ref{prop_red2to3}, it suffices to show the assertion for $3$-connected graphs.
Every $3$-connected planar graph has (up to homeomorphisms) a unique embedding into the sphere due to Whitney~\cite{whitney_congruent_1932} for finite graphs and Imrich~\cite{I-Whitney} for infinite graphs.
Thus, every automorphism of a $3$-connected infinite locally finite quasi-transitive VAP-free planar graph $G$ induces a homeomorphism of the plane.
So faces are mapped to faces and cycles that are face boundaries are mapped to such cycles.
As $G$ is quasi-transitive, the length of finite face boundaries is bounded by some $n\in\nat$.

Every cycle in~$G$ determines an inner face and an outer face in the plane.
The inner face contains only finitely many vertices as $G$ is VAP-free.
Hence, every cycle is the sum of all face boundaries of the faces that lie in its inner part in the plane and thus, is the sum of cycles of length at most~$n$.
So Corollary~\ref{cor_diestel} implies the assertion.
\end{proof}

Before we tackle the general case, we state a result that can be seen as a (simplified) dual version of Theorem~\ref{thm_DD} for planar graphs.
It follows from a result in~\cite{H-GeneratingCycleSpace}.
In order to state the theorem, we need some definitions.

Let $G$ be a planar graph with planar embedding $\varphi\colon G\to\real^2$.
A \emph{face} of~$\varphi$ is a component of $\real^2\sm \varphi(G)$.
Two cycles $C_1,C_2$ in a planar graph are \emph{nested} if no $C_i$ has vertices in distinct faces of~$\varphi(C_{3-i})$.
A set of cycles is \emph{nested} if every two of its elements are nested.

\begin{thm}\label{thm_nestedCycles}\cite{H-GeneratingCycleSpace}
For every locally finite planar $3$-connected graph~$G$, there is a non-empty $\Aut(G)$-invariant nested set of cycles that generates the cycle space of~$G$.\qed
\end{thm}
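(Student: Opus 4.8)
The final statement is Theorem~\ref{thm_nestedCycles}, which asserts that every locally finite planar $3$-connected graph~$G$ admits a non-empty $\Aut(G)$-invariant nested set of cycles generating its cycle space. Here is how I would approach proving it.

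\medskip

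The plan is to exploit the rigidity of the embedding. By Whitney's theorem for finite graphs and Imrich's extension to infinite graphs (both cited already in the proof of Proposition~\ref{prop_VAPFreeAccess}), a locally finite $3$-connected planar graph~$G$ has an essentially unique embedding $\varphi$ into the sphere, and consequently every automorphism of~$G$ permutes the faces of~$\varphi$ and maps face boundaries to face boundaries. The natural first candidate for the generating set is therefore the set $\FF$ of all \emph{finite} face boundaries of~$\varphi$: this set is manifestly $\Aut(G)$-invariant, and any two finite face boundaries are nested, since the closed disc bounded by one finite face meets the closed disc bounded by another finite face either in a shared boundary arc or not at all — in particular neither has vertices strictly inside the other. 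So the nestedness and invariance are automatic; the only real content is that $\FF$ \emph{generates} the cycle space. First I would treat the VAP-free case as a warm-up (it is exactly the argument already given in the proof of Proposition~\ref{prop_VAPFreeAccess}): when $G$ is VAP-free, every cycle~$C$ encloses only finitely many vertices, hence finitely many faces, all of which are finite faces, and $C$ is the sum of their boundaries.

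\medskip

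The hard part will be the graphs that are \emph{not} VAP-free, where a cycle~$C$ may enclose infinitely many vertices on both sides, so that neither side is a finite sum of finite face boundaries, and indeed $\FF$ alone may fail to generate. Here I would invoke the cited paper~\cite{H-GeneratingCycleSpace} — the statement explicitly says the theorem "follows from a result in~\cite{H-GeneratingCycleSpace}" — so the substance is a translation step rather than a from-scratch construction. The result there produces, for a locally finite planar graph, an $\Aut(G)$-invariant nested set of cuts (the dual of Theorem~\ref{thm_DD}) or equivalently, via planar duality, a nested generating set for the cycle space: a cycle in~$G$ corresponds to a cut in a suitable notion of planar dual, and a nested set of such cuts dualizes to a set of cycles that is nested in the sense defined just before the theorem (no $C_i$ has vertices in distinct faces of $\varphi(C_{3-i})$). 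I would make this correspondence precise: set up the dual graph (or multigraph) $G^*$ with respect to $\varphi$, check that $3$-connectivity and local finiteness of~$G$ give enough regularity on the dual side for the cited generating theorem to apply, pull back the $\Aut(G^*)\cong\Aut(G)$-invariant nested generating set of bonds of~$G^*$ to cycles of~$G$, and verify that "nested as bonds" becomes exactly "nested as cycles" under this dictionary. Non-emptiness is immediate since the cycle space of a $3$-connected graph on more than two vertices is non-trivial.

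\medskip

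I expect the main obstacle to be technical rather than conceptual: handling the dual object correctly when~$G$ is not VAP-free. When the vertex set has accumulation points in the plane, the "dual graph" is not a graph in the naive sense (a face may be incident with infinitely many edges, or two faces may share infinitely many edges), and one must be careful about which notion of dual makes the planar-duality correspondence between cycles and bonds actually a bijection on the relevant spaces. This is presumably precisely the setting for which~\cite{H-GeneratingCycleSpace} was written, so I would lean on its framework and confine my own work to: (i) verifying its hypotheses hold for duals of locally finite $3$-connected planar graphs, and (ii) checking the two nestedness notions match up. Everything else — invariance under $\Aut(G)$ and non-emptiness — is formal.
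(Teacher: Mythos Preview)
The paper does not prove Theorem~\ref{thm_nestedCycles} at all: it is stated with an immediate \qed\ and attributed entirely to~\cite{H-GeneratingCycleSpace}. There is no argument in the present paper to compare your proposal against; the theorem functions here as a black box imported from elsewhere.

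That said, one remark on your sketch: you read the phrase ``dual version of Theorem~\ref{thm_DD}'' as indicating that the proof in~\cite{H-GeneratingCycleSpace} proceeds by producing nested cuts and then translating via planar duality. That is likely a misreading. The sentence is describing the \emph{role} of the theorem --- it does for the cycle space what Theorem~\ref{thm_DD} does for the cut space --- not its method of proof. The title of~\cite{H-GeneratingCycleSpace} (``Generating the cycle space of planar graphs'') suggests the argument there works with cycles directly. Your proposed detour through a dual graph, with all the attendant difficulties you yourself flag (infinite-degree dual vertices, ill-behaved duals in the non-VAP-free case), is probably not how the cited result is obtained, and would in any case require substantial care to make rigorous. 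Your VAP-free warm-up is fine and matches the argument in Proposition~\ref{prop_VAPFreeAccess}, but that is precisely the case where the theorem is not needed.
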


We are able to finish the combinatorial proof of Dunwoody's theorem on the accessibility of locally finite quasi-transitive planar graphs.

\begin{thm}\label{thm_PlanarAccess}\cite{D-PlanarGraphsAndCovers}
Every locally finite quasi-transitive planar graph is accessible.
\end{thm}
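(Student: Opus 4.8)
The plan is to argue by induction on the degree sequence of the orbits of~$G$, which is a legitimate induction by Lemma~\ref{lem_IndOnDegSequ}; throughout I may assume that $G$ is connected with more than one vertex, the one-vertex case being trivial. First I would dispose of low connectivity. If $G$ is not $2$-connected, then by Proposition~\ref{prop_red1to2} accessibility of~$G$ reduces to accessibility of its blocks, and by the remark following that proposition every block is again a locally finite quasi-transitive planar graph with a strictly smaller degree sequence of orbits, so the induction hypothesis applies. If $G$ is $2$-connected but not $3$-connected, I would use Proposition~\ref{prop_red2to3}: the torsos of Tutte's decomposition of~$G$ are $K_2$'s, cycles, or $3$-connected locally finite quasi-transitive planar graphs (planar because the extra adhesion edges can be drawn along the sides of the corresponding $2$-separators), and by Remark~\ref{rem_TutteForInduc} each has a strictly smaller degree sequence of orbits; the $K_2$'s and cycles are trivially accessible, and the $3$-connected torsos are handled by the induction hypothesis. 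This reduces the problem to the case that $G$ itself is $3$-connected.

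For $3$-connected~$G$ I would split on VAP-freeness. If $G$ is VAP-free, Proposition~\ref{prop_VAPFreeAccess} gives the conclusion directly. So suppose $G$ is $3$-connected, locally finite, quasi-transitive, planar and \emph{not} VAP-free; fix its embedding, which is unique up to homeomorphism. By Theorem~\ref{thm_nestedCycles} there is a non-empty $\Aut(G)$-invariant nested set~$\CF$ of cycles that generates the cycle space of~$G$. If the cycles in~$\CF$ have bounded length, then the cycle space of~$G$ is generated by cycles of bounded length and Corollary~\ref{cor_diestel} finishes the proof. Hence I may assume that the lengths of the cycles in~$\CF$ are unbounded, and it is precisely here that the failure of VAP-freeness must be used.

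The idea for this last case is to convert~$\CF$ into an $\Aut(G)$-invariant family~$\cS$ of finite vertex separators along which to cut. Since not all cycles of~$\CF$ can bound faces (otherwise quasi-transitivity would force them to have bounded length), there is a cycle of~$\CF$ whose two sides both contain vertices of~$G$; from such a cycle, using the nestedness of~$\CF$ and the $3$-connectedness of~$G$ (to rule out a vertex all of whose neighbours lie on the separator), I would extract a finite vertex set~$S$ whose $\Aut(G)$-orbit~$\cS$ satisfies the hypotheses of Lemma~\ref{lem_SmallerDegSequ}: $G-S$ is disconnected, each translate of~$S$ meets at most one component of $G-S$ (this is where nestedness enters, much as comparability enters in Lemma~\ref{lem_totalOrder}), and no vertex of~$S$ has all its neighbours in~$S$. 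Then, letting $H$ be a maximal subgraph of~$G$ not disconnected by any translate of~$S$ as in Lemma~\ref{lem_SmallerDegSequ}, that lemma yields that $H$ has a strictly smaller degree sequence of orbits; since $H$ is also a locally finite quasi-transitive planar graph, it is accessible by the induction hypothesis, and since there are only finitely many $\Aut(G)$-orbits of such subgraphs~$H$, a single bound~$k$ works for all of them. Finally, the subgraphs~$H$ and the separators in~$\cS$ are arranged in a tree-like fashion, so a ray case analysis as in the proof of Proposition~\ref{prop_red1to2} applies: two rays inside a common~$H$ are separated by at most~$k$ vertices, two rays in distinct pieces by a single separator, and a ray that leaves every piece crosses infinitely many separators and is therefore eventually separated from any inequivalent ray. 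This bounds the size of a separator between any two ends of~$G$, so $G$ is accessible and the induction closes.

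The step I expect to be the main obstacle is the construction of the separator family~$\cS$ in the non-VAP-free case: one has to pass from the (possibly unboundedly long) cycles of the nested generating set~$\CF$ to finite vertex separators that are $\Aut(G)$-invariant, compatible with the orbit structure so that Lemma~\ref{lem_SmallerDegSequ} genuinely applies and strictly lowers the degree sequence, and whose deletion does not merge distinct ends. The structural inputs that should make this possible are the nestedness of~$\CF$ provided by Theorem~\ref{thm_nestedCycles} and the uniqueness of the embedding of the $3$-connected graph~$G$; once~$\cS$ is available, the surrounding induction and the ray analysis are routine adaptations of the argument in Proposition~\ref{prop_red1to2}.
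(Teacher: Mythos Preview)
Your plan is correct and follows the paper's proof almost exactly: induct on the degree sequence of the orbits, reduce to the $3$-connected case via Propositions~\ref{prop_red1to2} and~\ref{prop_red2to3} (with the remarks that the pieces have smaller degree sequence), dispose of the VAP-free case by Proposition~\ref{prop_VAPFreeAccess}, and in the $3$-connected non-VAP-free case cut along a cycle from the nested generating set~$\CF$ of Theorem~\ref{thm_nestedCycles}, apply Lemma~\ref{lem_SmallerDegSequ}, and induct on the pieces~$H$ with the ray analysis of Proposition~\ref{prop_red1to2} to reassemble.

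Where you diverge from the paper is only in making the non-VAP-free step look harder than it is. The separator~$S$ is simply $V(C)$ for a single cycle $C\in\CF$ with vertices on both sides of~$\varphi(C)$; no further ``extraction'' is needed. The paper obtains such a~$C$ directly: since~$G$ is not VAP-free there is \emph{some} cycle with both sides infinite, and since~$\CF$ generates the cycle space one member of~$\CF$ already has this property. Your detour through ``lengths in~$\CF$ unbounded, hence not all are face boundaries'' also works, but the preliminary case split on bounded length and the appeal to Corollary~\ref{cor_diestel} are unnecessary. Once $C\in\CF$ is fixed, the hypotheses of Lemma~\ref{lem_SmallerDegSequ} are immediate: $G-V(C)$ is disconnected; $3$-connectedness gives every vertex of~$C$ a neighbour off~$C$; and nestedness of the orbit $\{C\alpha\}$ says no $C\alpha$ has vertices in both faces of~$\varphi(C)$, so each $V(C\alpha)$ meets at most one side of $G-V(C)$. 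In short, the ``main obstacle'' you flag---turning the cycles of~$\CF$ into an admissible separator family---is not an obstacle at all: one cycle (and its orbit) suffices, and every cycle is finite regardless of whether lengths in~$\CF$ are bounded.
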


\begin{proof}
Let $G$ be a locally finite quasi-transitive planar graph.
Due to Propositions~\ref{prop_red1to2} and~\ref{prop_red2to3}, we may assume that $G$ is $3$-connected and due to Proposition~\ref{prop_VAPFreeAccess} we may assume that $G$ is not VAP-free.
Let $\varphi\colon G\to\real^2$ be a planar embedding of~$G$.
Let $\CF$ be a non-empty $\Aut(G)$-invariant nested set of cycles that generates the cycle space of~$G$.
Since $G$ is not VAP-free, there is some cycle $C$ of~$G$ such that both faces of $\real^2\sm \varphi(C)$ contain infinitely many vertices and hence some end of~$G$.
As $\CF$ generates the cycle space as an $\Aut(G)$-module, one of the cycles in~$\CF$ has the same property as~$C$.
Hence, we may assume $C\in\CF$.
In particular, $\{C\alpha\mid\alpha\in\Aut(G)\}$ is nested.\looseness-1

We consider a maximal subgraph $H$ of~$G$ such that no $C\alpha$ with $\alpha\in\Aut(G)$ disconnects~$H$.
In particular, $H$ is connected and for every $C\alpha$ with $\alpha\in\Aut(G)$ one of the faces of ${\real^2\sm \varphi(C\alpha)}$ is empty.
Note that there are only finitely many $\Aut(G)$-orbits of such subgraphs~$H$ as we find in each orbit some element that contains vertices of~$C$ by maximality of~$H$.
So if we show that $H$ is accessible, then the accessibility of~$G$ follows by case analysis as in the proof of~Proposition~\ref{prop_red1to2} since every two distinct such subgraphs $H_1,H_2$ can be separated by some $C\alpha$, so by some finite number of vertices.

Due to Lemma~\ref{lem_SmallerDegSequ}, the graph~$H$ has a strictly smaller degree sequence of its orbits than~$G$ as $C$ disconnects~$G$.
As $H$ is again a locally finite quasi-transitive planar graph, we conclude by induction on the degree sequence of the orbits of such graphs (cf.\ Lemma~\ref{lem_IndOnDegSequ}) with base case if~$G$ is VAP-free that~$H$ is accessible and thus also~$G$.
\end{proof}

We already mentioned that not every locally finite transitive graph is quasi-isometric to some locally finite Cayley graph~\cite{EFW-DLnotTransitive}.
This solved a problem due to Woess~\cite[Problem 1]{Woess-Topo}.
Due to our considerations here, we pose a restrictive version of Woess's problem for planar graphs:

\begin{prob*}\label{prob_2}
Is every locally finite transitive planar graph quasi-isometric to some locally finite planar Cayley graph?
\end{prob*}

\section*{Acknowledgement}

I thank A.~Foremny, K.~Heuer, and T.~Merz for reading earlier versions of this paper and I thank J.~Carmesin for showing me the idea of how to prove Theorem~\ref{thm_PlanarAccess}, once we have Proposition~\ref{prop_VAPFreeAccess}.

\providecommand{\bysame}{\leavevmode\hbox to3em{\hrulefill}\thinspace}
\providecommand{\MR}{\relax\ifhmode\unskip\space\fi MR }
\providecommand{\MRhref}[2]{%
  \href{http://www.ams.org/mathscinet-getitem?mr=#1}{#2}
}
\providecommand{\href}[2]{#2}

\end{document}